\documentclass[letterpaper]{amsart}
\usepackage{mathrsfs}  
\usepackage[square]{natbib}  
\usepackage{amssymb}  
\usepackage{tikz}  
\usetikzlibrary{shapes,arrows}

\swapnumbers  
\newtheorem{clm}{Claim}[section]
\newtheorem{main}[clm]{Main Claim}

\theoremstyle{definition}
\newtheorem{dfn}[clm]{Definition}

\theoremstyle{remark}
\newtheorem{hyp}[clm]{Hypothesis}
\swapnumbers
\newtheorem{case}{Case}

\DeclareMathOperator{\dom}{dom}  
\DeclareMathOperator{\val}{val}  
\DeclareMathOperator{\lcm}{lcm}  
\DeclareMathOperator{\conv}{conv}  
\newcommand{\abs}[1]{\left\lvert#1\right\rvert}  

\newcommand{\N}{\mathbb{N}}
\newcommand{\Q}{\mathbb{Q}}
\newcommand{\0}{\emptyset}
\newcommand{\ntnt}{\ensuremath{(0, 0)}}
\newcommand{\cv}[2]{\conv\left(V_{#1}(#2)\right)}
\newcommand{\cvn}{\cv{-1}{c}}  
\newcommand{\cvt}{\cv{0}{c}}   
\newcommand{\cvw}{\cv{1}{c}}   
\newcommand{\cvl}{\cv{\ell}{c}}  

\newcommand{\n}{\mathbf{n}}
\newcommand{\prt}{\bar{t}}
\newcommand{\prs}{\bar{s}}
\newcommand{\pru}{\bar{u}}
\newcommand{\C}{\mathfrak{C}}
\newcommand{\subC}{\mathscr{C}}
\newcommand{\subB}{\mathscr{B}}
\newcommand{\subP}{\mathscr{P}}
\newcommand{\perX}{{\operatorname{Per}(X)}}
\DeclareMathOperator{\maj}{maj}
\DeclareMathOperator{\majcl}{maj-cl}
\DeclareMathOperator{\pr}{pr}
\newcommand{\prbl}{\pr^\textnormal{bl}}
\DeclareMathOperator{\prcl}{pr-cl}
\DeclareMathOperator{\tor}{Tor}

\newcounter{dlcount}

\begin{document}

\title[Majority Decisions]{What majority decisions are possible with possible abstaining}

\author[P. Larson]{Paul Larson}
\author[N. Matteo]{Nick Matteo}
\address[Paul Larson, Nick Matteo]{Department of Mathematics\\Miami University\\
Oxford, OH 45056\\USA}
\email{larsonpb@muohio.edu, matteona@muohio.edu}
\thanks{The research of the first author is supported by NSF grant DMS-0801009.}
\author[S. Shelah]{Saharon Shelah}
\address[Saharon Shelah]{The Hebrew University of Jerusalem\\Einstein Institute of Mathematics\\
Edmond J. Safra Campus, Givat Ram\\Jerusalem 91904\\Israel}
\address[Saharon Shelah]{Department of Mathematics\\Hill Center-Busch Campus\\
Rutgers, The State University of New Jersey\\
110 Frelinghuysen Road\\Piscataway, NJ 08854-8019\\USA}
\thanks{Partially supported by the United States-Israel Binational Science Foundation.}

\date{\today}
\keywords{choice function, majority decision, Condorcet's paradox, tournament.}
\subjclass[2000]{Primary 91B14; Secondary 05C20}

\begin{abstract}
  Suppose we are given a family of choice functions on pairs from a given
  finite set.  The set is considered as a set of alternatives (say candidates for
  an office) and the functions as potential ``voters.''  The question is, what
  choice functions agree, on every pair, with the majority of some finite
  subfamily of the voters?  For the problem as stated, a complete
  characterization was given in \citet{shelah2009mdp}, but here we allow each
  voter to abstain.  There are four cases.
\end{abstract}
\maketitle

\section{Introduction}
Condorcet's ``paradox'' demonstrates that given three candidates A, B, and C,
majority rule may result in the society preferring A to B, B to C, and C to
A \citep{condorcet}.  \citet{McG53} proved a far-reaching extension of
Condorcet's paradox: For every asymmetric relation $R$ on a set $X$ of $\n$
candidates, there are $m$ linear order relations on $X$: $R_1, R_2, \ldots,
R_m$, with $R$ as their strict simple majority relation.  I.E. for every $a, b
\in X$,
\[
a \mathrel{R} b \iff \abs{\{i : a \mathrel{R_i} b \}} > \frac{m}{2}.
\]
In other words, given any set of choices from pairs from a set of $\n$
candidates, there is a population of $m$ voters, all with simple linear-order
preferences among the candidates, who will yield the given outcome for each pair
in a majority-rule election between them.

McGarvey's proof gave $m = \n(\n - 1)$.  \Citet{Ste59} found a construction
with $m = \n$ and noticed that a simple counting argument implies that $m$ must
be at least $\frac{\n}{\log \n}$.  \Citet{ErMo64} were able to give a
construction with $m = O\left(\frac{\n}{\log \n}\right)$.  \Citet{Alo02} showed
that there is a constant $c_1 > 0$ such that, given any asymmetric relation $R$,
there is some $m$ and linear orders $R_1, \ldots, R_m$ with
\[
a \mathrel{R} b \iff \abs{\{i : a \mathrel{R_i} b \}} > \frac{m}{2} + c_1 \sqrt{m}
\]
and that this is not true for any $c_2 > c_1$.

Gil Kalai asked to what extent the assertion of McGarvey's theorem holds if we
replace the linear orders by an arbitrary isomorphism class of choice functions
on pairs of elements.  Namely, when can we guarantee that every asymmetric
relation $R$ on $X$ could result from a finite population of voters, each using
a given kind of asymmetric relation on $X$?

Of course we must define what we mean by ``kind'' of asymmetric relation.
Let $\binom{X}{k}$ denote the family of subsets of $X$ with $k$ elements:
\[
\binom{X}{k} = \{Y \subseteq X : \abs{Y} = k\}.
\]
Either an individual voter's preferences among
candidates, or the outcomes which would result from each two-candidate
election, may be represented as
\begin{itemize}
  \item
    An asymmetric relation $R$ where $a \mathrel{R} b$ iff $a$ beats $b$;
    it is possible that $a \mathrel{\not \hspace{-3pt} R} b$,
    $b \mathrel{\not\hspace{-3pt}R} a$, and $a \neq b$.
  \item
    A choice function defined on some subfamily of $\binom{X}{2}$, choosing the
    winner in each pair.  Such a choice function is called ``full'' if its domain
    is all of $\binom{X}{2}$, and ``partial'' otherwise.
  \item
    An oriented graph, i.e.\ a directed graph with nodes $X$ and edges $a \to b$
    when $a$ beats $b$.
\end{itemize}
We shall treat these representations as largely interchangeable throughout this
paper.  Total asymmetric relations, full choice functions, and tournaments
(complete oriented graphs) all correspond to the case of no abstaining.
$\tor(c)$ will denote the oriented graph associated with a choice function $c$.
For any set $X$, $c \mapsto \tor(c)$ is a bijection of full choice functions
onto tournaments on $X$, and a bijection of all choice functions onto oriented
graphs on $X$.

\begin{hyp}
  Assume
  \begin{list}{(\alph{dlcount})}{\usecounter{dlcount}}
    \item
      $X$ is a finite set with $\n = \abs{X}$ and $\n \geq 3$.
    \item
      $\C$ is the set of choice functions on pairs in $X$;
      \[
      \C = \left\{ c \colon Y \to X : Y \subseteq \binom{X}{2},
      \forall \{x,y\} \in Y \, c\{x,y\} \in \{x,y\} \right\}.
      \]
    When $c\{x,y\}$ is not defined it is interpreted as abstention or having no
    preference.
  \end{list}
\end{hyp}

\begin{dfn}
  \begin{list}{(\alph{dlcount})}{\usecounter{dlcount}}
    \item
      $\perX$ is the set of permutations of $X$.
    \item
      Choice functions $c$ and $d$ are \emph{symmetric} iff there is
      $\sigma \in \perX$ such that
      \[
      d\{\sigma(x), \sigma(y)\} = \sigma(x) \iff c\{x, y\} = x;
      \]
      we write $d = c^\sigma$.
    \item
      A set of choice functions $\subC \subseteq \C$ is \emph{symmetric} iff it is
      closed under permutations of $X$.
      So for each $\sigma \in \perX$, if $c \in \subC$ then $c^\sigma \in \subC$.
  \end{list}
\end{dfn}
Choice functions $c$ and $d$ are symmetric iff $\tor(c)$ and $\tor(d)$ are
isomorphic graphs.
Note that symmetry of choice functions is an equivalence relation.
These symmetric sets of choice functions are in fact what was meant by ``kind
of asymmetric relation.''

The main result of \citet{shelah2009mdp} pertained to full choice functions for
the voters.  It was shown that an arbitrary choice function $d$ could result
from a symmetric set $\subC$, i.e.\ for each $d$ there is a finite set
$\{c_1, \ldots, c_m\} \subseteq \subC$ such that
\[
d\{x, y\} = x \iff \abs{\{i : c_i\{x,y\} = x\}} > \frac{m}{2},
\]
iff for some $c \in \subC$ and $x \in X$,
\[
\abs{\{y : c\{x,y\} = y\}} \neq \frac{\n - 1}{2}.
\]
We shall call this condition ``imbalance.''
\begin{dfn}
  For a choice function $c$,
  \begin{list}{(\alph{dlcount})}{\usecounter{dlcount}}
    \item
      for any pair $(x, y)$ in $X^2$, the \emph{weight} of $x$ over $y$ for $c$ is
      \[
      W^x_y(c) =
      \begin{cases}
        \phantom{-}1 & \text{if } c\{x,y\} = x \\
        \phantom{-}0 & \text{if } \{x,y\} \notin \dom c \\
        -1 & \text{if } c\{x,y\} = y
      \end{cases}
      \]
    \item
      $c$ is \emph{balanced} iff
      \[
      \forall x \in X, \sum_{y \in X} W^x_y(c) = 0.
      \]
      That is, $\abs{\{y : c\{x, y\} = x\}} = \abs{\{y : c\{x,y\} = y\}}$,
      for every $x$ in $X$.
    \item
      $c$ is \emph{imbalanced} iff $c$ is not balanced.
    \item
      $c$ is \emph{pseudo-balanced} iff every edge of $\tor(c)$ belongs to a directed cycle.
  \end{list}
\end{dfn}
Gil Kalai further asked whether the number $m$ can be given bounds in terms of
$\n$, and what is the result of demanding a ``non-trivial majority,'' e.g. 51\%.
We shall consider loose bounds while addressing the general
case, when voters are permitted to abstain.

To determine what symmetric sets of partial choice functions could produce an
arbitrary outcome, we shall characterize the set of all possible outcomes of a
symmetric set of choice functions, the ``majority closure'' $\majcl(\subC)$.
$\subC$ is a satisfactory class for this extension to McGarvey's theorem iff
$\majcl(\subC) = \C$.
\begin{dfn}
  \label{majdef}
  For $\subC \subseteq \C$ let $\majcl(\subC)$ be
  the set of $d \in \C$ such that, for some set of weights
  $\{ r_c \in [0,1]_\Q : c \in \subC \}$ with $\sum_{c \in \subC} r_c = 1$,
  \[
  d\{x,y\} = x \iff \sum_{c \in \subC} W^x_y(c) r_c > 0.
  \]
\end{dfn}
Here and throughout this paper, $[0,1]_\Q$ refers to the rationals in the unit
interval of the real line.  Note that we cannot assume that
$\majcl(\majcl(\subC)) = \majcl(\subC)$, and in fact we shall see this is not
true.  We must show that each function $d$ in the majority closure, thus
defined, can in fact be the outcome of a finite collection of voters using
choice functions in $\subC$; this is Claim~\ref{majthm}.

\citet{shelah2009mdp} gave a characterization of $\majcl(\subC)$ for symmetric
sets of full choice functions; there are just two cases.  $\majcl(\subC) = \C$
iff some $c \in \subC$ is imbalanced; if every $c \in \subC$ is balanced, then
$\majcl(\subC)$ is the set of all pseudo-balanced functions.  The situation
with possible abstention is more complicated; there are four cases.

In addition to the representations as relations, choice functions, or oriented
graphs discussed above, we shall also find it convenient to map each choice
function $c$ to a sequence in $[-1,1]_\Q$ indexed by $X^2$, the ``probability
sequence''
\[
\pr(c) = \langle W^x_y(c) : (x, y) \in X^2 \rangle.
\]
Let $\pr(\subC) = \{ \pr(c) : c \in \subC \}$.
$\pr(X)$ will denote the set of all sequences $\prt$ in $[-1,1]^{X^2}_\Q$ such that
$t_{x,y} = -t_{y,x}$; $\pr(X)$ contains $\pr(\C)$.

\newpage
\section{Basic Definitions and Facts}

\begin{dfn}
  \label{prtdef}
  For a probability sequence $\prt \in \pr(X)$,
  \begin{list}{(\alph{dlcount})}{\usecounter{dlcount}}
    \item
      $\prt$ is \emph{balanced} iff for each $x \in X$
      \[
      \sum_{y \in X} t_{x,y} = 0.
      \]
    \item
      $\maj(\prt)$ is the $c \in \C$ such that $c\{x,y\} = x \iff t_{x,y} > 0$.
  \end{list}
\end{dfn}
Note that $\maj$ and $\pr$ are mutually inverse functions from $\C$ to
$\pr(\C)$; $c = \maj(\pr(c))$ and $\prt = \pr(\maj(\prt))$ for all
$c \in \C$ and $\prt \in \pr(\C)$.
$\maj$ is also defined on the much larger set $\pr(X)$, which it maps onto $\C$.
The reason for reusing the name ``balanced'' is clear:
\begin{clm}
  \label{balanceclm}
  If $c \in \C$ is a balanced choice function, then $\pr(c)$ is a balanced
  probability sequence.  If $\prt \in \pr(\C)$ is a balanced probability
  sequence, then $\maj(\prt)$ is a balanced choice function.
\end{clm}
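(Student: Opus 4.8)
The plan is to observe that both implications are essentially a restatement of the definitions, once we record the single structural fact that the $(x,y)$ coordinate of $\pr(c)$ is by construction the weight $W^x_y(c)$. Writing $\prt = \pr(c)$, this says $t_{x,y} = W^x_y(c)$ for every $(x,y) \in X^2$; in particular the diagonal entries vanish, since $\{x,x\}$ is never a pair in $\dom c$ and correspondingly $t_{x,x} = -t_{x,x} = 0$ for any member of $\pr(X)$. With this identification the balance condition on a choice function, $\sum_{y \in X} W^x_y(c) = 0$ for all $x$, and the balance condition on a probability sequence, $\sum_{y \in X} t_{x,y} = 0$ for all $x$, become literally the same equation.

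For the first implication, I would take $c \in \C$ balanced and set $\prt = \pr(c)$. Fixing $x \in X$ and substituting $t_{x,y} = W^x_y(c)$ gives $\sum_{y \in X} t_{x,y} = \sum_{y \in X} W^x_y(c) = 0$, so $\prt$ is a balanced probability sequence.

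For the converse, the one extra ingredient is that $\maj$ and $\pr$ are mutually inverse on $\C$ and $\pr(\C)$, as already noted before the claim. Given a balanced $\prt \in \pr(\C)$, I would put $c = \maj(\prt)$, so that $\prt = \pr(c)$ and hence $t_{x,y} = W^x_y(c)$. Then for each $x$ we have $\sum_{y \in X} W^x_y(c) = \sum_{y \in X} t_{x,y} = 0$, which is exactly the statement that $c = \maj(\prt)$ is balanced.

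There is no real obstacle to overcome; the only point that deserves emphasis is why the hypothesis $\prt \in \pr(\C)$ cannot be weakened to $\prt \in \pr(X)$. A general probability sequence may have entries that are arbitrary rationals in $[-1,1]$, and $\maj$ records only their signs, so a balanced $\prt \in \pr(X)$ can easily produce an imbalanced $\maj(\prt)$: the cancellations witnessing $\sum_{y \in X} t_{x,y} = 0$ need not survive the passage to signs. Membership in $\pr(\C)$ is precisely what forces every entry into $\{-1,0,1\}$ and makes it equal to the corresponding weight of $\maj(\prt)$, which is what allows the two balance conditions to be identified.
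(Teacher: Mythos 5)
Your proof is correct and follows essentially the same route as the paper's: both arguments reduce to the identification $t_{x,y} = W^x_y(c)$ under $\prt = \pr(c)$ and the fact that $\maj$ and $\pr$ are mutually inverse on $\C$ and $\pr(\C)$. Your closing remark about why the hypothesis cannot be weakened to $\prt \in \pr(X)$ matches the counterexample the paper gives immediately after the claim.
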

\begin{proof}
  Suppose $c \in \C$ is balanced.
  For every $x \in X$,
  \[
  \sum_{y \in X} W^x_y(c) = 0.
  \]
  $\pr(c)_{x,y} = W^x_y(c)$, so
  \[
  \sum_{y \in X} \pr(c)_{x,y} = 0,
  \]
  i.e.\ $\pr(c)$ is balanced.

  Now suppose $\prt \in \pr(\C)$ is balanced.  $\prt = \pr(c)$ for some
  $c \in \C$, and $c = \maj(\prt)$.
  \[
  \sum_{y \in X} t_{x,y} = 0,
  \]
  but $t_{x,y} = W^x_y(c)$, so
  \[
  \sum_{y \in X} W^x_y(c) = 0
  \]
  and $c$ is balanced.
\end{proof}
However, it is not the case that $\maj(\prt)$ is balanced for every balanced
$\prt \in \pr(X)$.  For instance, choose $x, z \in X$ and define
a sequence $\prt$ which has $t_{z,x} = 1$, $t_{x,z} = -1$; for all
$y \notin \{x,z\}$
\begin{align*}
  t_{x,y} = t_{y,z}& = \frac{1}{\n - 2},\\
  t_{y,x} = t_{z,y}& = \frac{-1}{\n - 2};
\end{align*}
and for all other pairs $(u, v)$, $t_{u,v} = 0$.  One may check that $\prt$ is
balanced, yet $\maj(\prt)$ has $\sum_{y \in X} W^x_y(\maj(\prt)) = \n - 3 > 0$
for any $X$ with at least 4 elements.

\begin{dfn}
  \label{choicedef}
  For a choice function $c \in \C$,
  \begin{list}{(\alph{dlcount})}{\usecounter{dlcount}}
    \item
      $c$ is \emph{partisan} iff there is nonempty $W \subsetneq X$ such that
      \[
      c\{x,y\} = x \iff x \in W \text{ and } y \notin W.
      \]
    \item
      $c$ is \emph{tiered} iff there is a partition $\{X_1, X_2, \ldots, X_k\}$ of $X$
      such that $c\{x,y\} = x$ iff $x \in X_i$,
      $y \in X_j$, and $i > j$.  We say $c$ is $k$-tiered where the partition has $k$
      sets.  (So a partisan function is 2-tiered.)
    \item
      $c$ is \emph{chaotic} iff it is both imbalanced and not partisan.
  \end{list}
\end{dfn}

\begin{dfn}
  \label{subdef}
  For a subset $\subC \subseteq \C$,
  \begin{list}{(\alph{dlcount})}{\usecounter{dlcount}}
    \item
      $\subC$ is \emph{trivial} iff $\subC = \0$ or $\subC = \{c\}$ where
      $\dom c = \0$, i.e.\ $c$ makes no decisions.
    \item
      $\subC$ is \emph{balanced} iff every $c \in \subC$ is balanced;
      $\subC$ is \emph{imbalanced} iff $\subC$ is not balanced.
    \item
      $\subC$ is \emph{partisan} iff every $c \in \subC$ is partisan;
      $\subC$ is \emph{nonpartisan} iff $\subC$ is not partisan.
    \item
      $\subC$ is \emph{chaotic} iff there is some chaotic $c \in \subC$.
    \item
      $\prcl(\subC)$ is the convex hull of $\pr(\subC)$, i.e.
      \[
      \prcl(\subC) = \left\{ \sum_{i=1}^k r_i \prt_i : k \in \N, r_i \in [0,1]_\Q, \sum_{i=1}^k r_i = 1,
      \prt_i \in \pr(\subC) \right\}.
      \]
  \end{list}
\end{dfn}

We can now establish some straightforward results which allow us to describe
the possible outcomes due to voters chosen from a given symmetric set more
explicitly.

\begin{clm}
  If $\subC$ is a symmetric subset of $\C$, then $d$ is the strict simple
  majority outcome of some finite set $\{c_1, \ldots, c_m\}$
  chosen from $\subC$ iff
  \[
  d\{x,y\} = x \iff \sum_{i = 1}^m W^x_y(c_i) > 0.
  \]
\end{clm}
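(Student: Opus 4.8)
The plan is to reduce the statement to a direct comparison of vote counts with the weighted sum $\sum_{i=1}^m W^x_y(c_i)$, treating the three possible values of $W^x_y$ separately. First I would fix a pair $\{x,y\}$ and a finite list $c_1,\dots,c_m$ drawn from $\subC$, and partition the indices according to whether the voter chooses $x$, chooses $y$, or abstains on that pair. Writing $a=\abs{\{i:c_i\{x,y\}=x\}}$, $b=\abs{\{i:c_i\{x,y\}=y\}}$, and $z$ for the number of abstentions, the definition of $W^x_y$ gives immediately that each index contributes $+1$, $-1$, or $0$ respectively, so that $\sum_{i=1}^m W^x_y(c_i)=a-b$ regardless of $z$.

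Next I would spell out what ``strict simple majority outcome'' must mean once abstention is permitted: a voter who abstains on $\{x,y\}$ casts no vote in that pairwise contest, so $x$ is the strict simple majority choice exactly when strictly more of the participating voters prefer $x$ to $y$, i.e.\ when $a>b$. Combining this with the computation above, $x$ wins the pair iff $a>b$ iff $a-b>0$ iff $\sum_{i=1}^m W^x_y(c_i)>0$, and by the same token $y$ wins iff the sum is negative while the pair is left undecided iff $a=b$, i.e.\ iff the sum vanishes. Since $\pr(c_i)_{x,y}=W^x_y(c_i)$, this says precisely that the majority outcome of $c_1,\dots,c_m$ is $\maj$ applied to the coordinatewise sum $\sum_{i=1}^m \pr(c_i)$, which is the content of the displayed equivalence.

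The only point needing care---and the step I would treat as the main obstacle---is keeping the three-valued bookkeeping of $W^x_y$ consistent, so that the ``simple majority'' being compared is the majority among expressed preferences rather than an absolute majority of the whole list; this is exactly what distinguishes the present abstention-allowed setting from McGarvey's, where $a+b=m$ forces $a>b$, $a>m/2$, and $a-b>0$ to coincide, so that the distinction is invisible there. Finally I would observe that the output $d=\maj(\sum_i\pr(c_i))$ indeed lies in $\C$, since on each pair it selects one of the two elements or nothing, and that the symmetry hypothesis on $\subC$ plays no essential role in this particular equivalence beyond fixing the class from which voters are drawn; it is the closure of $\subC$ under $\perX$ that will do real work in the later realization results rather than here.
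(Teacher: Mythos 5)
Your argument is the same as the paper's: the paper's entire proof is the one-line observation that $d\{x,y\}=x$ iff $\abs{\{i:c_i\{x,y\}=x\}}>\abs{\{i:c_i\{x,y\}=y\}}$ iff $\sum_{i=1}^m W^x_y(c_i)>0$, which is exactly your $a>b\iff a-b>0$ computation with the abstention count $z$ dropping out. Your additional remarks (on the meaning of strict simple majority under abstention and the inessential role of symmetry here) are correct but only elaborate the same reasoning.
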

\begin{proof}
  $d$ is the strict simple majority outcome iff
  \begin{align*}
    d\{x,y\} = x
    \iff & \abs{\{i : c_i\{x,y\} = x\}} > \abs{\{i : c_i\{x,y\} = y\}} \\
    \iff & \sum_{i=1}^m W^x_y(c_i) > 0.
  \end{align*}
\end{proof}

\begin{clm}
  \label{majthm}
  Suppose $\subC$ is a symmetric subset of $\C$.  Then $d \in \majcl(\subC)$ iff
  $d$ is a strict simple majority outcome of some
  $\{c_1, \ldots, c_m\} \subseteq \subC$.
\end{clm}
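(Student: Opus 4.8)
The plan is to observe that the two conditions in the statement differ only in whether the coefficients attached to the elements of $\subC$ are arbitrary nonnegative rationals summing to $1$ or genuine integer voter-multiplicities, and that one can pass freely between these two descriptions because multiplying a weighted sum by a positive constant preserves its strict sign. The first thing I would record is that $\subC$ is finite: since $X$ is finite, $\binom{X}{2}$ is finite and hence $\C$ is finite, so every sum indexed by $c \in \subC$ is a finite sum and common denominators exist. (The symmetry hypothesis on $\subC$ is not actually needed for this particular equivalence.) I would then combine with the preceding claim, which already identifies ``strict simple majority outcome of $\{c_1,\dots,c_m\}$'' with the condition $d\{x,y\} = x \iff \sum_{i=1}^m W^x_y(c_i) > 0$, so that both sides reduce to a sign condition on a finite sum.

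For the direction from a majority outcome to the closure, suppose $d$ is the strict simple majority outcome of $\{c_1,\dots,c_m\} \subseteq \subC$. For each $c \in \subC$ I would set $r_c = \abs{\{i : c_i = c\}}/m$, so that $r_c \in [0,1]_\Q$ and $\sum_{c \in \subC} r_c = 1$. Then
\[
\sum_{c \in \subC} W^x_y(c)\, r_c = \frac{1}{m} \sum_{i=1}^m W^x_y(c_i),
\]
which has the same sign as $\sum_{i=1}^m W^x_y(c_i)$ because $1/m > 0$. Hence these $r_c$ witness $d \in \majcl(\subC)$ directly from Definition~\ref{majdef}.

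For the converse, suppose $d \in \majcl(\subC)$ with witnessing weights $r_c \in [0,1]_\Q$, $\sum_{c \in \subC} r_c = 1$. Since there are finitely many such rationals, I would take a common denominator $M \in \N$ with $M \ge 1$, so that $p_c := M r_c \in \N$ for every $c$ and $\sum_{c \in \subC} p_c = M$. Forming the finite subfamily of $\subC$ that lists each $c$ exactly $p_c$ times, of total size $m = M$, one computes for every pair
\[
\sum_{i=1}^m W^x_y(c_i) = \sum_{c \in \subC} p_c\, W^x_y(c) = M \sum_{c \in \subC} W^x_y(c)\, r_c,
\]
and since $M > 0$ this is positive exactly when $\sum_{c \in \subC} W^x_y(c)\, r_c > 0$, i.e.\ exactly when $d\{x,y\} = x$. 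By the preceding claim, $d$ is then the strict simple majority outcome of this subfamily.

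There is no genuine obstacle in this argument; the one point I would make sure to state carefully is that the translation must preserve strict inequalities and equalities simultaneously. Scaling by the positive constant $1/m$ (respectively $M$) neither creates nor destroys a zero sum, so the abstention case ($\sum = 0$) is matched up as faithfully as the decided cases ($\sum > 0$ and, by antisymmetry of $W$, $\sum < 0$); this is exactly what is needed for the two characterizations of $d$ to agree on every pair.
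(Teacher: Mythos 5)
Your argument is correct and follows essentially the same route as the paper: the forward direction uses the same frequencies $r_c = \abs{\{i : c_i = c\}}/m$, and the converse clears denominators (the paper uses $m = \lcm\{b_c\}$ where $r_c = a_c/b_c$, which is your common denominator $M$) and takes the corresponding integer number of copies of each function. Your added remarks that $\subC$ is finite and that symmetry is not actually used are accurate but do not change the substance of the proof.
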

\begin{proof}
  If $d \in \C$ and there is a set $\{c_1, \ldots, c_m\} \subseteq \subC$ with
  $d$ as their strict simple majority outcome, then for each
  $c \in \subC$ let $r_c$ be the number of times that $c$ appears among the $c_i$,
  divided by $m$:
  \[
  r_c = \frac{\abs{\{i : c_i = c\}}}{m}.
  \]
  Clearly $\forall c \in \subC \  0 \leq r_c \leq 1$ and
  $\sum_{c \in \subC} r_c = 1$.  Furthermore,
  \[
  d\{x,y\} = x \iff
  \sum_{i = 1}^m W^x_y(c_i) > 0 \iff
  \sum_{i = 1}^m \frac{W^x_y(c_i)}{m} > 0 \iff
  \sum_{c \in \subC} W^x_y(c) r_c > 0.
  \]

  Conversely, suppose $d \in \majcl(\subC)$.  There are $r_c = \frac{a_c}{b_c}$ for
  each $c \in \subC$ with $\sum r_c = 1$, and
  \[
  d\{x,y\} = x \iff \sum_{c \in \subC} W^x_y(c) r_c > 0.
  \]
  Let $m = \lcm\{b_c : c \in \subC\}$.
  Now construct a finite set $\{c_1, \ldots, c_m\}$ by taking $a_c \frac{m}{b_c}$
  copies of each $c \in \subC$.  $\frac{m}{b_c}$ is an integer since $b_c \mid m$.
  Now for each $c \in \subC$,
  \[
  \sum_{c_i = c} W^x_y(c_i) = W^x_y(c) \cdot \abs{\{i : c_i = c\}} =
  W^x_y(c) a_c \frac{m}{b_c} = W^x_y(c) r_c m.
  \]
  So
  \begin{align*}
    \sum_{i = 1}^m W^x_y(c_i) = \sum_{c \in \subC} \sum_{c_i = c} W^x_y(c_i) =
    \sum_{c \in \subC} W^x_y(c) r_c m. \\
    \sum_{c \in \subC} W^x_y(c) r_c m > 0 \iff
    \sum_{c \in \subC} W^x_y(c) r_c > 0,
  \end{align*}
  so $d$ is the simple majority relation of the $c_i$.
\end{proof}

\begin{clm}
  \label{majclisprcl}
  For any symmetric $\subC \subseteq \C$,
  $\majcl(\subC) = \{\maj(\prt) : \prt \in \prcl(\subC)\}$.
\end{clm}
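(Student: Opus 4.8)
The plan is to observe that the two sides of the claimed equality are just two encodings of the same object, so the proof is a matter of matching the definition of $\majcl$ (Definition~\ref{majdef}) against that of $\prcl$ (Definition~\ref{subdef}(e)), using the two elementary facts that $\pr(c)_{x,y} = W^x_y(c)$ and that $\maj$ converts the sign of a coordinate into a choice. As a preliminary, I would note that $\pr(X)$ is cut out of $[-1,1]^{X^2}_\Q$ by the linear conditions $t_{x,y} = -t_{y,x}$ and is therefore convex; hence $\prcl(\subC) \subseteq \pr(X)$, so that $\maj$ (which, as remarked after Definition~\ref{prtdef}, is defined on all of $\pr(X)$) is applicable to every $\prt \in \prcl(\subC)$.

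For the inclusion $\majcl(\subC) \subseteq \{\maj(\prt) : \prt \in \prcl(\subC)\}$, I would start from $d \in \majcl(\subC)$, with rational weights $\{r_c : c \in \subC\}$ summing to $1$ such that $d\{x,y\} = x \iff \sum_{c} W^x_y(c) r_c > 0$. Set $\prt = \sum_{c \in \subC} r_c\, \pr(c)$. Since the points $\pr(c)$ are exactly the members of $\pr(\subC)$ and the $r_c$ form a convex weighting, $\prt \in \prcl(\subC)$. Because $t_{x,y} = \sum_{c} r_c W^x_y(c)$, the defining condition for $d$ reads $d\{x,y\} = x \iff t_{x,y} > 0$, which says precisely that $d = \maj(\prt)$.

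For the reverse inclusion, I would take $\prt \in \prcl(\subC)$, say $\prt = \sum_{i=1}^k r_i\, \pr(c_i)$ with $c_i \in \subC$, $r_i \in [0,1]_\Q$, and $\sum_i r_i = 1$, and set $d = \maj(\prt)$. Here the $c_i$ may repeat, so to reach the format required by $\majcl$ I would consolidate: for each $c \in \subC$ put $s_c = \sum_{i : c_i = c} r_i$. Then $s_c \in [0,1]_\Q$, $\sum_{c} s_c = \sum_i r_i = 1$, and $\sum_{c} W^x_y(c) s_c = \sum_i W^x_y(c_i) r_i = t_{x,y}$. The definition of $\maj$ then gives $d\{x,y\} = x \iff t_{x,y} > 0 \iff \sum_{c} W^x_y(c) s_c > 0$, so the weights $\{s_c\}$ witness $d \in \majcl(\subC)$.

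The whole argument is a matter of unwinding definitions and, in particular, never uses the symmetry of $\subC$ (that hypothesis is simply the standing one). The only steps requiring any care are the bookkeeping in the reverse inclusion—collapsing repeated summands into a single weight per choice function while preserving both the normalization $\sum_{c} s_c = 1$ and the coordinatewise identity with $\prt$—and the preliminary observation that $\maj$ is genuinely defined on all of $\prcl(\subC)$, which follows from the convexity of $\pr(X)$ noted above.
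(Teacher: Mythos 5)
Your proof is correct and follows essentially the same route as the paper's: both directions are handled by setting $\prt = \sum_c r_c \pr(c)$ and observing that $t_{x,y} = \sum_c r_c W^x_y(c)$, so that the sign condition defining $\maj$ coincides with the sign condition defining $\majcl$. The extra bookkeeping you supply in the reverse direction (consolidating repeated summands into a single weight $s_c$ per choice function) and the remark that symmetry is never used are both accurate refinements of what the paper leaves implicit.
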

\begin{proof}
  Suppose $d \in \majcl(\subC)$.  Then there are $r_c$ per $c \in \subC$ with
  $\sum_{c \in \subC}r_c = 1$ and
  \[
  d\{x,y\} = x \iff \sum_{c \in \subC} r_c W^x_y(c) > 0.
  \]
  Let $\prt = \sum_{c \in \subC} r_c \pr(c)$ be a probability sequence
  in $\prcl(\subC)$.  Then
  \[
  t_{x,y} = \sum_{c \in \subC} r_c \pr(c)_{x,y} =
  \sum_{c \in \subC} r_c W^x_y(c).
  \]
  So
  \[
  \maj(\prt)\{x,y\} = x \iff \sum_{c \in \subC} r_c W^x_y(c) > 0,
  \]
  i.e.\ $d = \maj(\prt)$.

  Suppose $d = \maj(\prt)$ for some $\prt \in \prcl(\subC)$.  Then
  for some $r_c$ with $\sum_{c \in \subC} r_c = 1$,
  $d = \maj\left(\sum_{c \in \subC} r_c \pr(c)\right)$.
  \[
  d\{x,y\} = x \iff \sum_{c \in \subC} r_c \pr(c)_{x,y} > 0 \iff
  \sum_{c \in \subC} r_c W^x_y(c) > 0.
  \]
  So $d \in \majcl(\subC)$.
\end{proof}

\newpage
\section{A Characterization}
\begin{main}
  Given a symmetric $\subC \subseteq \C$,
  \begin{enumerate}
    \item
      \label{trivial}
      $\subC$ is trivial $\iff \majcl(\subC)$ is trivial.
    \item
      \label{balanced}
      $\subC$ is balanced but nontrivial $\iff \majcl(\subC)$ is all
      pseudo-balanced choice functions on $X$.
    \item
      \label{partisan}
      $\subC$ is partisan and nontrivial $\iff \majcl(\subC)$ is all
      tiered choice functions on $X$.
    \item
      \label{else}
      $\subC$ is imbalanced and nonpartisan $\iff \majcl(\subC) = \C$.
  \end{enumerate}
\end{main}
\begin{proof}
  No nontrivial $\subC$ is both balanced and partisan, so the sets
  \begin{gather*}
    \{\subC \subset \C : \subC \text{ is trivial} \},\\
    \{\subC \subset \C : \subC \text{ is symmetric, nontrivial, and balanced} \},\\
    \{\subC \subset \C : \subC \text{ is symmetric, nontrivial, and partisan} \}, \text{ and}\\
    \{\subC \subseteq \C : \subC \text{ is symmetric, imbalanced, and nonpartisan}\}
  \end{gather*}
  partition the family of all symmetric subsets of $\C$.
  Thus proving each forward
  implication in the claim will give the reverse implications.

  \ref{trivial}.
  If no-one in $\subC$ makes any choices, no combination can have a
  majority choice.

  \ref{balanced}.
  This is the content of Section~\ref{balancesec}, below.

  \ref{partisan}.
  Suppose $\subC$ is partisan and $d \in \majcl(\subC)$.
  Then there is a finite set $\{c_1, \ldots, c_m\} \subseteq \subC$
  with $d$ as the strict simple majority outcome.
  For each $x \in X$, let $k_x$ be the number of $c_i$ such that $x$ is
  in the ``winning'' partite set.
  For any $(x,y) \in X^2$, let $k$ be the number of
  $c_i$ where $x$ and $y$ are both in the winning subset;
  the number of $c_i$ with $c_i\{x,y\} = x$ is $k_x - k$, and the number with
  $c_i\{x,y\} = y$ is $k_y - k$, so
  \[
  d\{x,y\} = x \iff k_x - k > k_y - k \iff k_x > k_y.
  \]
  Partition $X$ into subsets for each value of $k_x$,
  \[
  X = \bigcup \{\{x \in X : k_x = k\} : k \in \{k_x : x \in X \}\}.
  \]
  These subsets form the tiers, so $d$ is a tiered function.

  For the reverse inclusion, suppose $d$ is an arbitrary tiered function
  with tiers $\{X_1, \ldots, X_k\}$.  Let $c \in \subC$.
  For each $x \in X$, let $\Gamma_x \subset \perX$ be all permutations
  holding $x$ fixed and $c_x$ be a choice function in $\subC$ with $x$
  in its winning subset.
  (If $y$ is in the winning subset of $c$, $c^{(x,y)}$ will suffice for $c_x$.)
  Now construct $\{c_1, \ldots, c_m\}$ by taking,
  for $1 \leq i \leq k$, each $x \in X_i$,
  and each $\sigma \in \Gamma_x$, $i$ copies of $c^\sigma_x$.
  Suppose $x \in X_i$, $y \in X_j$, and $i > j$ (so $d\{x,y\} = x$).  Then
  for any $z \in X_i \setminus \{x\}$, any $z \in X_j \setminus \{y\}$, and any $z$ in
  other tiers, $x$ and $y$ are chosen by equal numbers of
  $\{c^\sigma_z : \sigma \in \Gamma_z\}$.
  So they are chosen by an equal number of $\{c_1, \ldots, c_m\}$ except among the
  functions $c^\sigma_x$ and $c^\sigma_y$; there are $i$ such functions
  $c^\sigma_x$ for each $\sigma \in \Gamma_x$, and $j$ such functions $c^\sigma_y$
  for each $\sigma \in \Gamma_y$.  The number of permutations fixing $x$ and the
  number fixing $y$ are the same, $\abs{\Gamma_x} = \abs{\Gamma_y}$.  The number
  of permutations fixing $x$ and putting $y$ in the losing subset and the number
  fixing $y$ and putting $x$ in the losing subset is the same, $l$.  There are $i
  l$ functions which choose $x$ over $y$, and $j l$ which choose $y$ over $x$; $i
  l > j l$ so the strict simple majority outcome of the $c_i$ chooses $x$.

  If $i = j$, then $i l = j l$, so $x$ and $y$ tie in the majority outcome.
  Thus $d$ is the strict simple
  majority outcome of $\{c_1, \ldots, c_m\}$ and $d \in \majcl(\subC)$.

  \ref{else}.
  If $\subC$ is a chaotic set, the conclusion follows from Section~\ref{elsesec}, below.

  Otherwise, $\subC$ is imbalanced and nonpartisan, but not chaotic.
  In this case, $\subC$ contains imbalanced functions which are all partisan,
  and nonpartisan functions which are all balanced.
  By symmetry, $\subC$ contains a nontrivial balanced symmetric subset $\subB$ and a
  nontrivial partisan symmetric subset $\subP$.

  Let $d \in \C$.  Suppose $b \to a$ is any edge of $d$.  For each $z \in X
  \setminus \{a,b\}$, by Claim~\ref{triclm} there is a voter population $T_z$ from
  $\subB$ such that $a$ beats $z$, $z$ beats $b$, $b$ beats a, and no other pairs
  are decided; moreover, the same number of voters, say $m$, choose the winner in
  each pair, and no dissenting votes occur.
  In the combination $\bigcup_z T_z$, $m$ voters choose $a$ over each $z$,
  and each $z$ over $b$.  $m(\n - 2)$ voters choose $b$ over $a$.

  Let $c \in \subP$ be a partisan function, and $l$ be the size of the set of
  winning candidates under $c$.  Let $A \subset \subP$ be the set of functions
  symmetric to $c$ so that $a$ is on the losing side, $B \subset \subP$ be the
  functions symmetric to $c$ so that $b$ is on the winning side, and $C = A \cap
  B$---the permutations with $b$ winning and $a$ losing.
  \[
  \abs{A} = \binom{\n - 1}{l}, \quad
  \abs{B} = \binom{\n - 1}{l - 1}, \quad
  \abs{C} = \binom{\n - 2}{l - 1}.
  \]
  Any candidates besides $a$ or $b$ are tied over all the voters of $A$, $B$, or
  $C$, since each is selected in the winning set an equal number of times.
  Furthermore, $a$ does not beat $b$ in any choice function in $A$, $B$, or $C$.

  Now we seek to take appropriate numbers of copies of the sets $A$, $B$, and $C$
  so that, for some constant $k$, any $z \in X \setminus \{a,b\}$
  defeats $a$ by $k$ votes, and loses to $b$ by $k$ votes.  Say we have $k_0$
  copies of $A$, $k_1$ copies of $B$, and $k_2$ copies of $C$ in a population $D$.
  Now for each $z \in X \setminus \{a, b\}$,
  \begin{align*}
    \abs{\{c \in D : c\{z,a\} = z\}}& = k_0 \binom{\n - 2}{l - 1} + k_1 \binom{\n -
    3}{l - 2} + k_2 \binom{\n - 3}{l - 2}, \\
    \abs{\{c \in D : c\{z,a\} = a\}}& = k_1 \binom{\n - 3}{l - 2}, \\
    \abs{\{c \in D : c\{b,z\} = b\}}& = k_0 \binom{\n - 3}{l - 1} + k_1 \binom{\n -
    2}{l - 1} + k_2 \binom{\n - 3}{l - 1}, \\
    \abs{\{c \in D : c\{b,z\} = z\}}& = k_0 \binom{\n - 3}{l - 1}.
  \end{align*}
  So we solve $k_0 \binom{\n - 2}{l - 1} + k_2 \binom{\n - 3}{l - 2} = k_1
  \binom{\n - 2}{l - 1} + k_2 \binom{\n - 3}{l - 1}$, with coefficients not all
  zero.  There are solutions
  \begin{align*}
    k_0& = \n - 2l,& k_1& = 0,& k_2& = \n - 2, & \text{if } l \leq \frac{\n}{2}, \\
    k_0& = 0,& k_1& = 2l - \n,& k_2& = \n - 2, & \text{if } l > \frac{\n}{2}.
  \end{align*}
  Let $k = k_0 \binom{\n - 2}{l - 1} + k_2 \binom{\n - 3}{l - 2}$, the number of
  votes for $b$ over $z$, or $z$ over $a$, for any $z \in X \setminus \{a,b\}$.
  Now take the union of $k$ copies of the population $\bigcup_z T_z$, $m k_0$ copies of $A$,
  $m k_1$ copies of $B$, and $m k_2$ copies of $C$ for our voter population.
  $b$ defeats any $z$ $m k$ times among the partisan functions, and $z$
  defeats $b$ $k m$ times among the balanced functions, so they tie; similarly,
  $a$ and $z$ tie.  Thus we are left with $b \to a$.  The union of such
  populations for each edge in $d$ yields $d$ as the majority outcome.

  Hence $\C \subseteq \majcl(\subC)$.
\end{proof}

\newpage
\section{Balanced Choices}
\label{balancesec}
\begin{dfn}
  For a choice function $c \in \C$,
  \begin{list}{(\alph{dlcount})}{\usecounter{dlcount}}
    \item
      $c$ is \emph{triangular} iff for some $\{x, y, z\} \in \binom{X}{3}$,
      $c\{x,y\} = x$, $c\{y,z\} = y$, $c\{z,x\} = z$,
      and for any $\{u, v\} \not \subset \{x, y, z\}$, $\{u, v\} \notin \dom c$.
      We write $c^{x,y,z}$ for such $c$.
    \item
      $c$ is \emph{cyclic} iff for some $\{x_1, \ldots, x_k\} \in \binom{X}{k}$,
      \begin{itemize}
        \item
          $c\{x_i,x_j\} = x_i$ iff $j \equiv i + 1$ mod $k$.
        \item
          No other pair $\{x, y\}$ is in $\dom c$.
      \end{itemize}
      We write $c^{x_1, \ldots, x_k}$ for such $c$.  (Triangular functions
      are a specific case of cyclic functions.)
    \item
      Let $\prbl$ be the set of all balanced $\prt \in \pr(X)$.
  \end{list}
\end{dfn}
Note that if $\subC$ is symmetric and $c^{x,y,z} \in \subC$, then
$c^{u,v,w} \in \subC$ for any $\{u, v, w\} \in \binom{X}{3}$; similarly for cyclic functions.

\begin{clm}
  \label{weightpseudo}
  If a choice function $c = \maj(\prt)$ for some $\prt \in \prbl$,
  then $c$ is pseudo-balanced.
\end{clm}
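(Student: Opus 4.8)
The plan is to argue by contradiction, exploiting the balance condition together with the antisymmetry $t_{x,y} = -t_{y,x}$ that holds for every $\prt \in \pr(X)$. Suppose some edge $a \to b$ of $\tor(c)$ lies on no directed cycle; here $a \to b$ means $c\{a,b\} = a$, i.e.\ $t_{a,b} > 0$. Since an edge $a \to b$ lies on a directed cycle exactly when there is a directed path from $b$ back to $a$, the assumption says no such path exists. I would then let $S \subseteq X$ be the set of all vertices reachable from $b$ by a directed path in $\tor(c)$, taking $b \in S$ itself; by hypothesis $a \notin S$.

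The key structural observation is that $S$ has no outgoing edges: if $v \in S$ and $v \to w$ is an edge, then $w$ is reachable from $b$ and so $w \in S$. Equivalently, for every $v \in S$ and every $w \in X \setminus S$ we have $c\{v,w\} \neq v$, and hence $t_{v,w} \leq 0$. This sign constraint on the edges crossing out of $S$ is what the rest of the argument will play against the balance condition.

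Next I would sum the balance equations over $S$. Because $\prt$ is balanced, $\sum_{w \in X} t_{v,w} = 0$ for each $v$, so $\sum_{v \in S}\sum_{w \in X} t_{v,w} = 0$. Splitting the inner sum according to whether $w \in S$ or $w \in X \setminus S$, the part indexed by $S \times S$ vanishes by antisymmetry—the off-diagonal terms $t_{v,w}$ and $t_{w,v}$ cancel in pairs, and the diagonal terms are forced to $0$ since $t_{v,v} = -t_{v,v}$—leaving
\[
\sum_{v \in S} \sum_{w \in X \setminus S} t_{v,w} = 0.
\]
Every summand here is $\leq 0$ by the previous paragraph, so in fact every summand must equal $0$. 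But taking $v = b \in S$ and $w = a \in X \setminus S$ gives $t_{b,a} = -t_{a,b} < 0$, a contradiction. Hence no edge of $\tor(c)$ avoids all directed cycles, i.e.\ $c$ is pseudo-balanced.

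The one genuinely delicate step is choosing $S$ as exactly the reachable set, so that three facts align: balance makes the full sum over $S$ vanish, antisymmetry kills the internal $S \times S$ contribution, and the absence of outgoing edges makes the crossing terms nonpositive. Once $S$ is fixed this way, forcing a strictly negative crossing term to be zero is immediate; everything else is routine.
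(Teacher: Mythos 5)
Your proof is correct and is essentially the paper's argument in mirror image: the paper partitions $X$ using the set of vertices having a winning chain to the tail of the offending edge, while you use the set reachable from its head, and both then sum the balance equations over one side of the cut, cancel the internal terms by antisymmetry, and derive a contradiction from the sign of the crossing terms. Your choice of $S$ even avoids the small case analysis the paper needs (whether the new vertex is $y$), but the underlying idea is the same.
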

\begin{proof}
  Assume $c = \maj(\prt)$ for some $\prt \in \prbl$.
  Let $x \to y$ be any edge of $\tor(c)$; then $t_{x,y} > 0$.  Suppose $x \to y$ is
  in no directed cycle.  Let $Y$ be the set of $z \in X$ with a winning chain to
  $x$, i.e.
  \[
  Y = \bigcup\left\{\{z_1, \ldots, z_k\} \in \binom{X \setminus \{y\}}{k} :
  k < \n, \forall i < k \  c\{z_i, z_{i+1}\} = z_i,  z_k = x \right\}.
  \]
  Let $\overline{Y} = X \setminus Y$.  Note that $y \in \overline{Y}$ and $x \in
  Y$, so $Y$ and $\overline{Y}$ partition $X$ into nonempty sets.  Suppose $z \in
  Y$ and $v \in \overline{Y}$.
  Say $z, z_1, \ldots, z_k$ is a winning chain from $z$ to $x$.  If $t_{z,v} < 0$,
  then $v, z, z_1, \ldots, z_k$ forms a winning chain from $v$ to $x$.  If $v$ is
  $y$, the chain forms a directed cycle with the edge $x \to y$ and we are done.
  If $v \neq y$, this contradicts that $v \notin Y$.  So we assume that $t_{z,v} \geq 0$.

  Now
  \[
  \sum_{z \in Y, v \in \overline{Y}} t_{z,v} > 0,
  \]
  since every $t_{z,v} \geq 0$, and $t_{x,y} > 0$ is among them.
  For each $u \in \overline{Y}$ let
  \begin{align*}
    r_u = & \sum_{z \in Y} t_{z, u}, \\
    \bar{r}_u = & \sum_{v \in \overline{Y}} t_{v, u}.
  \end{align*}
  Since $\prt$ is balanced, $r_u + \bar{r}_u = 0$, so
  \[
  \sum_{u \in \overline{Y}} (r_u + \bar{r}_u) = 0
  = \sum_{u \in \overline{Y}} r_u + \sum_{u \in \overline{Y}} \bar{r}_u.
  \]
  We have seen that the first summand is positive, so the second summand is
  negative.  But it is zero because for each pair $(u, v) \in \overline{Y}^2$ we
  have $t_{u,v} + t_{v,u} = 0$.
  This is a contradiction; so $x \to y$ must be in some directed cycle.
\end{proof}

\begin{clm}
  \label{balanceconvex}
  $\prbl$ is a convex subset of $\pr(X)$.
\end{clm}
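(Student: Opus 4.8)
The plan is to verify the definition of convexity directly: given two balanced sequences and a rational weight, I will show that their convex combination is again a balanced sequence lying in $\pr(X)$. The key observation is that both defining conditions of $\prbl$---membership in $\pr(X)$ and the balance equations---are preserved under convex combination. The balance equations and the antisymmetry relation $t_{x,y} = -t_{y,x}$ are homogeneous and linear, so they survive any linear combination; meanwhile the constraint that each coordinate lie in $[-1,1]_\Q$ survives because $[-1,1]_\Q$ is itself convex and closed under the rational operations involved.

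First I would fix $\prt, \prs \in \prbl$ and $r \in [0,1]_\Q$, and set $\pru = r\prt + (1-r)\prs$, so that $u_{x,y} = r\, t_{x,y} + (1-r)\, s_{x,y}$ for each $(x,y) \in X^2$. I would then confirm $\pru \in \pr(X)$ in two steps. For each coordinate, $u_{x,y}$ is a rational convex combination of $t_{x,y}, s_{x,y} \in [-1,1]_\Q$ and hence lies in $[-1,1]_\Q$. For the antisymmetry, using $t_{x,y} = -t_{y,x}$ and $s_{x,y} = -s_{y,x}$ we get $u_{x,y} = r\, t_{x,y} + (1-r)\, s_{x,y} = -r\, t_{y,x} - (1-r)\, s_{y,x} = -u_{y,x}$.

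Next I would verify the balance condition for $\pru$. For each $x \in X$,
\[
\sum_{y \in X} u_{x,y} = r \sum_{y \in X} t_{x,y} + (1-r) \sum_{y \in X} s_{x,y} = r \cdot 0 + (1-r) \cdot 0 = 0,
\]
since $\prt$ and $\prs$ are balanced. Thus $\pru \in \prbl$, and as $\prt, \prs, r$ were arbitrary, $\prbl$ is convex.

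I do not expect a substantial obstacle here: the statement is essentially the remark that $\prbl$ is the intersection of the convex box $[-1,1]^{X^2}_\Q$ with a homogeneous linear subspace (cut out by the antisymmetry and balance equations), and such an intersection is automatically convex. The only point requiring even minor attention is confirming that the convex combination stays inside the coordinate box $[-1,1]_\Q$, which follows from convexity of the interval; everything else is immediate from linearity.
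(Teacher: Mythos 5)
Your proof is correct and takes essentially the same approach as the paper: form the convex combination and verify the balance condition by linearity of the sum. You additionally check membership in $\pr(X)$ (the coordinate box and antisymmetry), which the paper omits but which is a harmless and easy addition.
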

\begin{proof}
  Suppose $\prt$ and $\prs$ are balanced probability sequences, and
  $a \in [0,1]_\Q$.
  Then $\pru = a \prt + (1 - a) \prs$ has, for any $x \in X$,
  \begin{align*}
    \sum_{y \in X} u_{x,y} & = \sum_{y \in X} \left(a t_{x,y} + (1 - a) s_{x,y}\right) \\
    & = a \sum_{y \in X} t_{x,y} + (1 - a) \sum_{y \in X} s_{x,y} \\
    & = a 0 + (1 - a) 0 \  = \  0.
  \end{align*}
  So $\pru$ is a balanced probability sequence.
\end{proof}

\begin{clm}
  \label{majinpseud}
  If $\subC \subset \C$ is symmetric, balanced, and nontrivial,
  then every $d \in \majcl(\subC)$ is pseudo-balanced.
\end{clm}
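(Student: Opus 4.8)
The plan is to assemble this directly from the three preceding claims, since each link in the chain has already been forged. The key observation is that $\majcl(\subC)$ consists precisely of the functions $\maj(\prt)$ for $\prt$ ranging over the convex hull $\prcl(\subC)$, so it suffices to show that this entire convex hull lands inside $\prbl$, the balanced probability sequences, and then invoke Claim~\ref{weightpseudo} to conclude pseudo-balancedness.

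First I would note that because $\subC$ is balanced, every $c \in \subC$ is a balanced choice function, so by Claim~\ref{balanceclm} each $\pr(c)$ is a balanced probability sequence. Hence $\pr(\subC) \subseteq \prbl$. Next I would use Claim~\ref{balanceconvex}: since $\prbl$ is a convex subset of $\pr(X)$ containing the generating set $\pr(\subC)$, it must contain the convex hull of that set. But $\prcl(\subC)$ is by definition exactly that convex hull, so $\prcl(\subC) \subseteq \prbl$.

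Finally I would take an arbitrary $d \in \majcl(\subC)$. By Claim~\ref{majclisprcl}, there is some $\prt \in \prcl(\subC)$ with $d = \maj(\prt)$, and by the containment just established $\prt \in \prbl$. Claim~\ref{weightpseudo} then immediately gives that $d = \maj(\prt)$ is pseudo-balanced, completing the argument.

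I do not anticipate a genuine obstacle here, as the statement is essentially a corollary of the machinery built up in this section; the only point demanding care is the bookkeeping that the two senses of ``balanced'' (for choice functions and for probability sequences) line up correctly, which is precisely what Claim~\ref{balanceclm} provides, and that the convex hull is captured by $\prbl$ rather than merely its generators, which is the role of Claim~\ref{balanceconvex}.
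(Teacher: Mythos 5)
Your proposal is correct and follows exactly the same route as the paper's own proof: apply Claim~\ref{balanceclm} to place $\pr(\subC)$ in $\prbl$, use the convexity of $\prbl$ from Claim~\ref{balanceconvex} to capture all of $\prcl(\subC)$, then combine Claim~\ref{majclisprcl} with Claim~\ref{weightpseudo} to conclude. No gaps; nothing further is needed.
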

\begin{proof}
  By Claim~\ref{balanceclm}, every $\prt \in \pr(\subC)$ is balanced.
  So by Claim~\ref{balanceconvex}, the convex hull $\prcl(\subC)$ is contained
  in $\prbl$.
  Any $d \in \majcl(\subC)$ is $\maj(\prt)$ for some $\prt \in \prcl(\subC)$, by
  Claim~\ref{majclisprcl}, hence is pseudo-balanced, by Claim~\ref{weightpseudo}.
\end{proof}

For the reverse inclusion, that every pseudo-balanced function is in the
majority closure, we shall consider the graph interpretation.
First we shall show that every triangular function is in the majority closure
of a balanced symmetric set.
\begin{figure}
  \hfill
  \begin{tikzpicture}[>=latex,inner sep=3pt]
  \useasboundingbox (0dd,-4dd) rectangle (88dd,76dd);
  \node (x)  at (22dd,76dd) {$x$};
  \node (y)  at ( 0dd,38dd) {$y$};
  \node (z)  at (22dd, 0dd) {$z$};
  \node (4)  at (66dd, 0dd) [fill,circle,inner sep=1.5pt] {};
  \node (r1) at (78dd,20dd) {};
  \node at (80dd,24dd) [fill=black!70!white,circle,inner sep=1pt] {};
  \node at (85dd,33dd) [fill=black!70!white,circle,inner sep=1pt] {};
  \node at (85dd,43dd) [fill=black!70!white,circle,inner sep=1pt] {};
  \node at (80dd,52dd) [fill=black!70!white,circle,inner sep=1pt] {};
  \node (r3) at (78dd,56dd) {};
  \node (m)  at (66dd,76dd) [fill,circle,inner sep=1.5pt] {};
  \draw [->] (x) -- (y);
  \draw [->] (y) -- (z);
  \draw [->] (z) -- (4);
  \draw [->] (4) -- (r1);
  \draw [->] (r3) -- (m);
  \draw [->] (m) -- (x);
\end{tikzpicture}
  \hfill
  \begin{tikzpicture}[>=latex,inner sep=3pt]
  \useasboundingbox (0dd,-4dd) rectangle (88dd,76dd);
  \node (x)  at (22dd,76dd) {$x$};
  \node (y)  at ( 0dd,38dd) {$y$};
  \node (z)  at (22dd, 0dd) {$z$};
  \node (4)  at (66dd, 0dd) [fill,circle,inner sep=1.5pt] {};
  \node (r1) at (78dd,20dd) {};
  \node at (80dd,24dd) [fill=black!70!white,circle,inner sep=1pt] {};
  \node at (85dd,33dd) [fill=black!70!white,circle,inner sep=1pt] {};
  \node at (85dd,43dd) [fill=black!70!white,circle,inner sep=1pt] {};
  \node at (80dd,52dd) [fill=black!70!white,circle,inner sep=1pt] {};
  \node (r3) at (78dd,56dd) {};
  \node (m)  at (66dd,76dd) [fill,circle,inner sep=1.5pt] {};
  \draw [->] (x) -- (4);
  \draw [->] (y) -- (z);
  \draw [->] (z) -- (x);
  \draw [->] (4) -- (r1);
  \draw [->] (r3) -- (m);
  \draw [->] (m) -- (y);
\end{tikzpicture}
  \hfill
  \begin{tikzpicture}[>=latex,inner sep=3pt]
  \useasboundingbox (0dd,-4dd) rectangle (88dd,76dd);
  \node (x)  at (22dd,76dd) {$x$};
  \node (y)  at ( 0dd,38dd) {$y$};
  \node (z)  at (22dd, 0dd) {$z$};
  \node (4)  at (66dd, 0dd) [fill,circle,inner sep=1.5pt] {};
  \node (r1) at (78dd,20dd) {};
  \node at (80dd,24dd) [fill=black!70!white,circle,inner sep=1pt] {};
  \node at (85dd,33dd) [fill=black!70!white,circle,inner sep=1pt] {};
  \node at (85dd,43dd) [fill=black!70!white,circle,inner sep=1pt] {};
  \node at (80dd,52dd) [fill=black!70!white,circle,inner sep=1pt] {};
  \node (r3) at (78dd,56dd) {};
  \node (m)  at (66dd,76dd) [fill,circle,inner sep=1.5pt] {};
  \draw [->] (x) -- (y);
  \draw [->] (y) -- (4);
  \draw [->] (z) -- (x);
  \draw [->] (m) -- (z);
  \draw [->] (r3) -- (m);
  \draw [->] (4) -- (r1);
\end{tikzpicture}
  \hfill

  \bigskip
  \hfill
  \begin{tikzpicture}[>=latex,inner sep=3pt]
  \useasboundingbox (0dd,0dd) rectangle (88dd,80dd);
  \node (x)  at (22dd,76dd) {$x$};
  \node (y)  at ( 0dd,38dd) {$y$};
  \node (z)  at (22dd, 0dd) {$z$};
  \node (4)  at (66dd, 0dd) [fill,circle,inner sep=1.5pt] {};
  \node (r1) at (78dd,20dd) {};
  \node at (80dd,24dd) [fill=black!70!white,circle,inner sep=1pt] {};
  \node at (85dd,33dd) [fill=black!70!white,circle,inner sep=1pt] {};
  \node at (85dd,43dd) [fill=black!70!white,circle,inner sep=1pt] {};
  \node at (80dd,52dd) [fill=black!70!white,circle,inner sep=1pt] {};
  \node (r3) at (78dd,56dd) {};
  \node (m)  at (66dd,76dd) [fill,circle,inner sep=1.5pt] {};
  \draw [->] (x) -- (y);
  \draw [->] (y) -- (z);
  \draw [->] (z) -- (m);
  \draw [->] (m) -- (r3);
  \draw [->] (r1) -- (4);
  \draw [->] (4) -- (x);
\end{tikzpicture}
  \hfill
  \begin{tikzpicture}[>=latex,inner sep=3pt]
  \useasboundingbox (0dd,0dd) rectangle (88dd,80dd);
  \node (x)  at (22dd,76dd) {$x$};
  \node (y)  at ( 0dd,38dd) {$y$};
  \node (z)  at (22dd, 0dd) {$z$};
  \node (4)  at (66dd, 0dd) [fill,circle,inner sep=1.5pt] {};
  \node (r1) at (78dd,20dd) {};
  \node at (80dd,24dd) [fill=black!70!white,circle,inner sep=1pt] {};
  \node at (85dd,33dd) [fill=black!70!white,circle,inner sep=1pt] {};
  \node at (85dd,43dd) [fill=black!70!white,circle,inner sep=1pt] {};
  \node at (80dd,52dd) [fill=black!70!white,circle,inner sep=1pt] {};
  \node (r3) at (78dd,56dd) {};
  \node (m)  at (66dd,76dd) [fill,circle,inner sep=1.5pt] {};
  \draw [->] (x) -- (m);
  \draw [->] (y) -- (z);
  \draw [->] (z) -- (x);
  \draw [->] (m) -- (r3);
  \draw [->] (r1) -- (4);
  \draw [->] (4) -- (y);
\end{tikzpicture}
  \hfill
  \begin{tikzpicture}[>=latex,inner sep=3pt]
  \useasboundingbox (0dd,0dd) rectangle (88dd,80dd);
  \node (x)  at (22dd,76dd) {$x$};
  \node (y)  at ( 0dd,38dd) {$y$};
  \node (z)  at (22dd, 0dd) {$z$};
  \node (4)  at (66dd, 0dd) [fill,circle,inner sep=1.5pt] {};
  \node (r1) at (78dd,20dd) {};
  \node at (80dd,24dd) [fill=black!70!white,circle,inner sep=1pt] {};
  \node at (85dd,33dd) [fill=black!70!white,circle,inner sep=1pt] {};
  \node at (85dd,43dd) [fill=black!70!white,circle,inner sep=1pt] {};
  \node at (80dd,52dd) [fill=black!70!white,circle,inner sep=1pt] {};
  \node (r3) at (78dd,56dd) {};
  \node (m)  at (66dd,76dd) [fill,circle,inner sep=1.5pt] {};
  \draw [->] (x) -- (y);
  \draw [->] (y) -- (m);
  \draw [->] (z) -- (x);
  \draw [->] (m) -- (r3);
  \draw [->] (r1) -- (4);
  \draw [->] (4) -- (z);
\end{tikzpicture}
  \hfill
  \caption{Constructing a triangle from permutations of a cycle, as in Claim~\ref{triclm}.  Note
  that every edge is matched by an opposing one, except for $xy$, $yz$, and $zx$.}
  \label{trifig}
\end{figure}
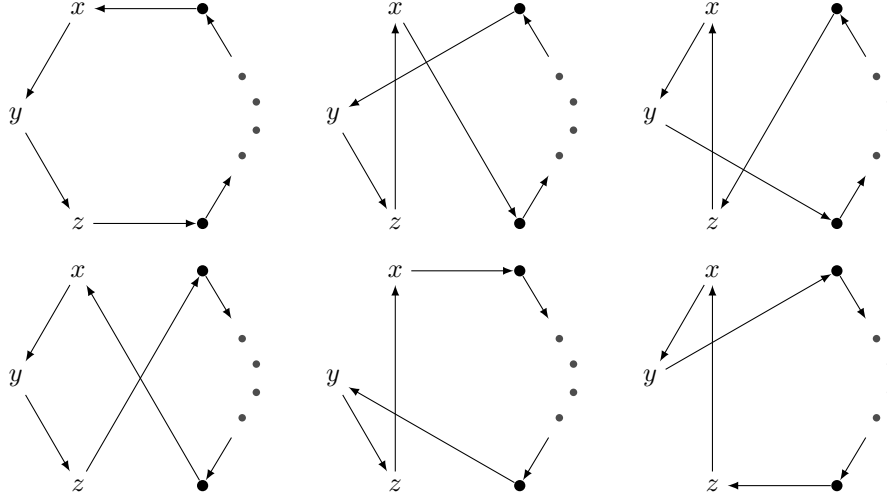
\begin{clm}
  \label{triclm}
  If $\subC \subset \C$ is symmetric, balanced, and nontrivial,
  then for any $\{x, y, z\} \in \binom{X}{3}$, the triangular function
  $c^{x,y,z} \in \majcl(\subC)$.
  Moreover, there is a voter population generating $c^{x,y,z}$ such that the
  same number of votes choose $x$ over $y$, $y$ over $z$, or $z$ over $x$, and
  there are no opposing votes in any of these cases.
\end{clm}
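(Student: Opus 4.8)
The plan is to first realize, for a suitable cycle length $k$, an entire \emph{clean} copy of a $k$-cyclic function as a point of $\prcl(\subC)$, and then to superpose six rotated copies of it so that everything cancels except the wanted triangle. By symmetry of $\subC$ it suffices to produce one triangle $c^{x,y,z}$, and by Claim~\ref{majclisprcl} it is enough to exhibit a positive rational multiple of $\pr(c^{x,y,z})$ inside $\prcl(\subC)$; the moreover clause will then be read off from the explicit voter population that this convex combination encodes.

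First I would fix a nontrivial $c \in \subC$. Since $c$ is balanced, $\tor(c)$ has equal in- and out-degree at every vertex and at least one edge, so it contains a directed cycle; choosing one of shortest length $k$ makes it \emph{chordless}, since a chord would split off a strictly shorter directed cycle. Write its vertices as $w_1 \to \cdots \to w_k \to w_1$. The key step is to average $c$ over $H$, the pointwise stabilizer in $\perX$ of $\{w_1,\ldots,w_k\}$, and to show
\[
\frac{1}{\abs{H}} \sum_{\sigma \in H} \pr(c^\sigma) = \pr(c^{w_1,\ldots,w_k}).
\]
On a cycle edge every $c^\sigma$ agrees with $c$ (as $\sigma$ fixes the cycle pointwise), giving entry $1$; on a non-adjacent pair inside the cycle chordlessness keeps the entry $0$; on a pair of outside vertices the transposition symmetry within $H$ forces $0$; and on a crossing pair $\{w_i,u\}$ the average is proportional to the sum of $W^{w_i}_{u'}(c)$ over outside $u'$, which vanishes because balance gives $\sum_{u'\in X} W^{w_i}_{u'}(c)=0$ while the two within-cycle terms at $w_i$ already cancel. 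Thus $\pr(c^{w_1,\ldots,w_k}) \in \prcl(\subC)$, and since $\subC$ is symmetric $\prcl(\subC)$ is permutation-invariant, so $\pr$ of \emph{every} $k$-cyclic function lies in $\prcl(\subC)$.

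It remains to pass from a $k$-cycle to a triangle, which is what Figure~\ref{trifig} records. Taking $x,y,z$ together with the $k-3$ remaining cycle vertices $q_1,\ldots,q_{k-3}$, I would superpose the six $k$-cyclic functions that run $x\to y$, $y\to z$, $z\to x$ (each edge occurring in four of the six) with the long return path threaded through the $q_j$ in both orders. Every path edge $q_i \to q_{i+1}$ is matched by $q_{i+1}\to q_i$ and every connector edge $w \to q_j$ by $q_j \to w$, so all of these cancel and only the three triangle edges survive, each with the same positive net weight. In terms of sequences this says a positive multiple of $\pr(c^{x,y,z})$ equals $\frac16$ of the sum of six elements of $\prcl(\subC)$, hence lies in $\prcl(\subC)$, and Claim~\ref{majclisprcl} gives $c^{x,y,z}\in\majcl(\subC)$. (When $k=3$ the cyclic function is already triangular and the figure step is vacuous; when $k=4$ the six cycles collapse to three taken twice, but the cancellation is identical.)

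For the moreover clause I would unwind these convex combinations into an actual voter population, one copy of $c^\sigma$ per term, and verify that it is clean on the triangle. The point is that each of $\{x,y\}$, $\{y,z\}$, $\{z,x\}$ is, inside each of the six cyclic functions, either a cycle edge, in which case every voter of that block that decides the pair decides it the same way, or a non-adjacent pair of a cyclic function, in which case by chordlessness no voter of that block decides it at all; no block ever casts an opposing vote on a triangle edge. All dissent is confined to crossing pairs and to outside pairs, none of which is a triangle edge, and these net to zero and so remain undecided. The main obstacle is precisely the non-cyclic balanced case: a general $c$ carries many edges besides the chosen cycle, and the crux is the averaging identity above, where balance is exactly what annihilates the crossing edges while chordlessness keeps the triangle edges free of dissent.
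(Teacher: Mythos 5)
Your proposal is correct, and it rests on the same core ingredients as the paper's proof --- a shortest (hence chordless) directed cycle in some $c \in \subC$, rotations of that cycle through $x, y, z$, averaging over a pointwise stabilizer, balance to annihilate the pairs crossing between the cycle and its complement, and chordlessness to guarantee unanimity on the triangle edges --- but you factor the argument differently. The paper superposes just three rotations $c_1, c_2, c_3$ of the raw function $c$, each averaged over $\Gamma_{x,y,z}$, and verifies the cancellation on crossing pairs by a case analysis tracking which rotation places an outside vertex on an in-edge versus an out-edge of the cycle; your two-stage version first proves the exact identity $\frac{1}{\abs{H}}\sum_{\sigma \in H}\pr(c^\sigma) = \pr(c^{w_1,\ldots,w_k})$ over the cycle's pointwise stabilizer $H$, and only then superposes six clean cyclic sequences as in Figure~\ref{trifig}. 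Your factorization buys a cleaner verification (each pair type is handled by one symmetry or one balance computation, with no case split over the three rotations) and a reusable intermediate fact --- that the probability sequence of every $k$-cyclic function lies exactly in $\prcl(\subC)$, which is stronger than what the paper extracts at this stage and would shortcut part of Claim~\ref{cycclm}. The cost is a mildly larger and more redundant population: your six blocks are really the paper's three, since $\Gamma_{x,y,z}$ already contains the permutations reversing the return path through the non-triangle cycle vertices. Your verification of the ``moreover'' clause (each triangle edge is a cycle edge in four blocks and an undefined chordless pair in the remaining two, so it receives the same unanimous count) is sound, including the degenerate collapses at $k=3$ and $k=4$.
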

\begin{proof}
  Let $c_0 \in \subC$ and let $m$ be the size of the smallest directed cycle
  in $\tor(c_0)$.  Index the distinct elements of this cycle, in order, as
  $x_1, x_2, \ldots, x_m$.
  Let $c = c_0^\sigma$ where $\sigma \in \perX$ takes $x_1$ to $x$,
  $x_2$ to $y$, and $x_3$ to $z$.
  Now consider the cycle in $\tor(c)$, so $x_1 = x$, $x_2 = y$, and $x_3 = z$.
  We define
  \begin{align*}
    c_1& = c, \\
    c_2& = c^\rho \text{ where } \rho \text{ takes } x \mapsto y, y \mapsto z,
    z \mapsto x, \\
    c_3& = c^\rho \text{ where } \rho \text{ takes } x \mapsto z, y \mapsto x,
    z \mapsto y.
  \end{align*}
  Let $\Gamma_{x,y,z} \subset \perX$ be the permutations fixing $x$,
  $y$, and $z$, and take $\{ c_i^\sigma : 1\nobreak\leq\nobreak
  i\nobreak\leq\nobreak3, \, \sigma \in \Gamma_{x,y,z} \}$ as our finite set of
  voters.  We claim the majority outcome of this set is $c^{x,y,z}$.

  If $u, v \in X \setminus \{x, y, z\}$, then equal numbers of permutations
  $\sigma \in \Gamma_{x,y,z}$ have $c_i^\sigma\{u,v\} = u$ and $c_i^\sigma\{u,v\}
  = v$, so they are tied.  If $u$ is among $\{x, y, z\}$ and $v$ is not, then
  there are some out-edges from $u$ in $\tor(c)$.  Since $c$ is balanced, there
  are an equal number of in-edges to $u$.  For each $i \in \{1,2,3\}$, $v$ will
  occupy the in-edges which are not on the cycle in as many permutations of $c_i$
  as permutations where it occupies out-edges which are not on the cycle.  If $m =
  3$, then $v$ occupies no edges on the cycle.  Otherwise, $v$ occupies an
  out-edge from $u$ along the cycle only in
  \begin{itemize}
    \item
      $c_1^\sigma$, for some set of $\sigma \in \Gamma_{x,y,z}$, if $u = z$;
      then $v$ occupies an in-edge to $z$ on the cycle in $c_3$ for an equal
      number of permutations.
    \item
      $c_2^\sigma$, for some set of $\sigma \in \Gamma_{x,y,z}$, if $u = x$;
      then $v$ occupies an in-edge to $x$ on the cycle in $c_1$ for an equal
      number of permutations.
    \item
      $c_3^\sigma$, for some set of $\sigma \in \Gamma_{x,y,z}$, if $u = y$;
      then $v$ occupies an in-edge to $y$ on the cycle in $c_2$ for an equal
      number of permutations.
  \end{itemize}
  Thus $u$ and $v$ are tied.
  Otherwise, $\{u, v\} \subset \{x, y, z\}$.
  For each $i \in \{1,2,3\}$, there are $\abs{\Gamma_{x,y,z}}$ many $c_i^\sigma$.
  $c_i^\sigma\{x,y\} = x$ if $i = 1$ or $i = 3$; this is two-thirds of the
  voters, so $x$ beats $y$.
  Similarly, $c_i^\sigma\{y,z\} = y$ if $i = 1$ or $i = 2$, and
  $c_i^\sigma\{z,x\} = z$ if $i = 2$ or $i = 3$.
  Thus the strict simple majority outcome is $c^{x,y,z}$.

  The collection $\{c_i^\sigma : 1 \leq i \leq 3, \sigma \in \Gamma_{x,y,z}\}$ is
  the voter population in the claim.  There are no opposing votes between $x$,
  $y$, or $z$ because such a vote would imply an edge between two vertices of the
  cycle which is not itself on the cycle, contradicting our choice of the smallest
  cycle.
\end{proof}

\begin{figure}
  \begin{tikzpicture}[>=latex,inner sep=3pt,minimum size=2.8pt]
  \node (x0) at ( 1cc,162dd) {$x_0$};
  \node (x1) at (-2cc, 11cc) {$x_1$};
  \node (x2) at ( 1cc,102dd) {$x_2$};
  \draw [->] (x0) -- (x1);
  \draw [->] (x1) -- (x2);
  \draw [->] (x2) -- (x0);

  \node (y0) at (5cc,162dd) {$x_0$};
  \node (y2) at (5cc,102dd) {$x_2$};
  \node (y3) at (8cc, 11cc) {$x_3$};
  \draw [->] (y0) -- (y2);
  \draw [->] (y2) -- (y3);
  \draw [->] (y3) -- (y0);

  \path (x1) -- node {\Huge $+$} (y3);

  \node (z0) at (15cc,14cc) {$x_0$};
  \node (z1) at (12cc,11cc) {$x_1$};
  \node (z2) at (15cc, 8cc) {$x_2$};
  \node (z3) at (18cc,11cc) {$x_3$};
  \draw [->] (z0) -- (z1);
  \draw [->] (z1) -- (z2);
  \draw [->] (z2) -- (z3);
  \draw [->] (z3) -- (z0);

  \path (y3) -- node {\Huge $\mapsto$} (z1);

  \node (u0) at ( 1cc,68dd) {$x_0$};
  \node (u1) at (-2cc, 4cc) {$x_1$};
  \node (ub) at (-23dd,25dd) {};
  \node      at (-20dd,23dd) [fill,circle,inner sep=0pt] {};
  \node      at (-16dd,19dd) [fill,circle,inner sep=0pt] {};
  \node      at (-1cc,15dd) [fill,circle,inner sep=0pt] {};
  \node (ue) at (-11dd,14dd) {};
  \node (ul) at (1cc,4dd) {$x_{m-1}$};
  \draw [->] (u0) -- (u1);
  \draw [->] (u1) -- (ub);
  \draw [->] (ue) -- (ul);
  \draw [->] (ul) -- (u0);

  \node (v0) at (5cc,68dd) {$x_0$};
  \node (vl) at (5cc,4dd) {$x_{m-1}$};
  \node (vm) at (8cc,3cc) {$x_m$};
  \draw [->] (v0) -- (vl);
  \draw [->] (vl) -- (vm);
  \draw [->] (vm) -- (v0);

  \node at (3cc,3cc) {\Huge $+$};
 
  \node (w0) at ( 15cc, 6cc) {$x_0$};
  \node (w1) at ( 12cc, 4cc) {$x_1$};
  \node (wb) at (145dd,25dd) {};
  \node      at (148dd,23dd) [fill,circle,inner sep=0pt] {};
  \node      at (152dd,19dd) [fill,circle,inner sep=0pt] {};
  \node      at (156dd,15dd) [fill,circle,inner sep=0pt] {};
  \node (we) at (157dd,14dd) {};
  \node (wl) at ( 15cc, 0cc) {$x_{m-1}$};
  \node (wm) at ( 18cc, 3cc) {$x_m$};
  \draw [->] (w0) -- (w1);
  \draw [->] (w1) -- (wb);
  \draw [->] (we) -- (wl);
  \draw [->] (wl) -- (wm);
  \draw [->] (wm) -- (w0);

  \path (vm) -- node[below] {\Huge $\mapsto$} (w1);
\end{tikzpicture}
  \caption{Constructing a cycle from triangles, as in Claim~\ref{cycclm}.}
  \label{cycfig}
\end{figure}
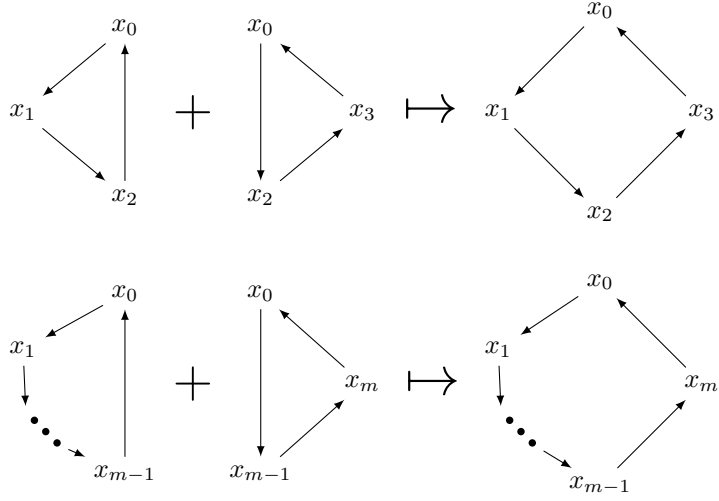
\begin{clm}
  \label{cycclm}
  If $\subC \subseteq \C$ and $c^{x,y,z} \in \majcl(\subC)$ for any $\{x, y, z\}
  \in \binom{X}{3}$, then for any $k \geq 3$ and $\{x_1, \ldots, x_k\} \in
  \binom{X}{k}$, $c^{x_1, \ldots, x_k} \in \majcl(\subC)$.
\end{clm}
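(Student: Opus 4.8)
The plan is to induct on $k$, taking the base case $k=3$ directly from the hypothesis that every triangular function lies in $\majcl(\subC)$. For the inductive step I will build the $k$-cycle by splicing a single triangle onto the $(k-1)$-cycle so as to cancel exactly one ``closing'' edge; this is precisely the geometry recorded in Figure~\ref{cycfig}. Throughout I will use the reformulation that $d \in \majcl(\subC)$ iff $d = \maj(\prt)$ for some $\prt \in \prcl(\subC)$, which is immediate from Definition~\ref{majdef} (it is the content of Claim~\ref{majclisprcl}, whose proof never invokes symmetry, so it applies to the unrestricted $\subC$ here).

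The crucial preliminary observation is a support restriction: if $\maj(\prt) = c^{x_1,\ldots,x_{k-1}}$, then $\prt$ must vanish on every pair $\{u,v\}$ that is not a cycle edge. Indeed, such a pair is left undecided by $\maj(\prt)$, which forces both $t_{u,v} \le 0$ and $t_{v,u} \le 0$; since $t_{u,v} = -t_{v,u}$, this gives $t_{u,v} = 0$. Hence any $\prt$ with $\maj(\prt) = c^{x_1,\ldots,x_{k-1}}$ is supported exactly on the $(k-1)$ cycle edges, positive in the forward direction, and similarly a sequence realizing a triangle is supported on just its three edges.

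Now for the inductive step, use the inductive hypothesis to choose $\prt \in \prcl(\subC)$ with $\maj(\prt) = c^{x_1,\ldots,x_{k-1}}$, and use the hypothesis to choose $\prs \in \prcl(\subC)$ with $\maj(\prs) = c^{x_{k-1}, x_k, x_1}$, the triangle oriented $x_{k-1}\to x_k \to x_1 \to x_{k-1}$. By the support observation, $\prt$ lives on the $(k-1)$-cycle edges and $\prs$ on the three triangle edges, and their supports overlap only in the pair $\{x_1, x_{k-1}\}$, where $\prt$ points $x_{k-1}\to x_1$ while $\prs$ points the opposite way. Writing $p = t_{x_{k-1}, x_1} > 0$ and $q = s_{x_1, x_{k-1}} > 0$ (both rational, since points of $\prcl(\subC)$ have rational entries), I set $\pru = \alpha\prt + \beta\prs$ with $\alpha = q/(p+q)$ and $\beta = p/(p+q)$, which are rationals in $[0,1]$ summing to $1$ and satisfy $u_{x_{k-1}, x_1} = \alpha p - \beta q = 0$.

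It then remains to check that $\maj(\pru) = c^{x_1,\ldots,x_k}$: on the forward edges $x_1 \to \cdots \to x_{k-1}$ inherited from $\prt$ and on the new edges $x_{k-1}\to x_k$ and $x_k \to x_1$ inherited from $\prs$, only one summand contributes and the sign stays positive; the pair $\{x_1, x_{k-1}\}$ is now a tie and hence undecided; and every remaining pair is $0$ in both $\prt$ and $\prs$. Since $\prcl(\subC)$ is convex, $\pru \in \prcl(\subC)$, whence $c^{x_1,\ldots,x_k} = \maj(\pru) \in \majcl(\subC)$. The one step I would watch most carefully is the support observation of the second paragraph: it is exactly what guarantees the two sequences interfere on no edge other than $\{x_1, x_{k-1}\}$, so that zeroing out that single edge cleanly lengthens the cycle instead of perturbing the rest of the configuration.
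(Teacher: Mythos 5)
Your proof is correct and follows essentially the same route as the paper's: induct on $k$, splice the triangle $c^{x_1,x_{k-1},x_k}$ onto the $(k-1)$-cycle, and weight the two voter populations so that their contributions on the pair $\{x_1,x_{k-1}\}$ cancel exactly, leaving that pair undecided and the rest of the $k$-cycle intact. The only difference is bookkeeping: the paper takes $\frac{L}{l_0}$ and $\frac{L}{l_1}$ integer copies with $L = \lcm(l_0,l_1)$ where you take rational convex weights $\frac{q}{p+q}$ and $\frac{p}{p+q}$ (interchangeable by Claim~\ref{majclisprcl}), and your explicit ``support observation'' appears in the paper only implicitly, as its pair-by-pair verification of ties.
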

\begin{proof}
  If $k = 3$, we have $c^{x_1, x_2, x_3} \in \majcl(\subC)$ by assumption.

  Suppose that $3 < k \leq \n$ and for any $k - 1$ distinct candidates
  $\{x_1, \ldots, x_{k-1}\} \subset X$ we have
  $c^{x_1, \ldots, x_{k - 1}} \in \majcl(\subC)$.  Say $\{c_1, \ldots, c_{m_0}\}$
  is a set of $m_0$ voters with $c^{x_1, \ldots, x_{k - 1}}$ as their strict
  simple majority outcome.  Let
  \[
  l_0 = \sum_{i=1}^{m_0} W_{x_1}^{x_{k-1}}(c_i).
  \]
  $l_0 > 0$ since $c^{x_1, \ldots, x_{k - 1}}\{x_1, x_{k-1}\} = x_{k-1}$.
  By hypothesis $c^{x_1,x_{k-1},x_k} \in \majcl(\subC)$.
  Say $\{c'_1, \ldots, c'_{m_1}\}$ is a set of $m_1$ voters with
  $c^{x_1,x_{k-1},x_k}$ as their strict simple majority outcome.
  Let
  \[
  l_1 = \sum_{i=1}^{m_1} W^{x_1}_{x_{k-1}}(c'_i).
  \]
  Like $l_0$, $l_1$ is a positive integer.
  Let $L = \lcm(l_0, l_1)$, and take $\frac{L}{l_0}$ copies of each $c_i$, and
  $\frac{L}{l_1}$ copies of each $c'_i$, to make a finite set of voters
  $\{d_1, \ldots, d_m\}$ where $m = \frac{L}{l_0} m_0 + \frac{L}{l_1} m_1$.
  Let $d$ be their strict simple majority outcome.
  For any $\{u, v\} \in \binom{X}{2}$, if $\{u, v\} \not \subset \{x_1, \ldots, x_k\}$
  then $u$ ties $v$ among the $c_i$ and the $c'_i$, so $\{u, v\} \notin \dom d$.
  Any two points on the cycle $(x_1, \ldots, x_{k-1})$ which are not adjacent are tied
  among the $c_i$ and among the $c'_i$.  Any point on the cycle besides $x_1$
  or $x_{k-1}$ is also tied with $x_k$ among the $c_i$ and among the $c'_i$.
  Any pair of consecutive points $(x_i, x_j)$ on the cycle, besides $(x_{k-1}, x_1)$,
  are tied among the $c'_i$, but the majority of the $c_i$ pick $x_i$.
  So $d\{x_i, x_j\} = x_i$.
  $x_{k-1}$ and $x_k$ tie among the $c_i$, but the
  majority of the $c'_i$ pick $x_{k-1}$, so $d\{x_{k-1},x_k\} = x_{k-1}$.
  Similarly $d\{x_k, x_1\} = x_k$.
  The only remaining pair to consider is $\{x_1, x_{k - 1}\}$.
  $d\{x_1, x_{k-1}\}$ is defined iff
  \[
  \sum_{i=1}^{m} W^{x_1}_{x_{k-1}}(d_i) \neq 0.
  \]
  The left hand side is equal to
  \begin{align*}
  \frac{L}{l_0} \sum_{i=1}^{m_0} W^{x_1}_{x_{k-1}}(c_i) +
  \frac{L}{l_1} \sum_{i=1}^{m_1} W^{x_1}_{x_{k-1}}(c'_i) \\
  = \frac{L}{l_0} (-l_0) + \frac{L}{l_1} (l_1) = 0.
  \end{align*}
  Thus $\{x_1, x_{k-1}\} \notin \dom d$, and $d\{x_i,x_j\} = x_i$ iff $j \equiv
  i + 1$ mod $k$, so $d = c^{x_1, \ldots, x_k} \in \majcl(\subC)$.

  By induction on $k$, any cyclic choice function on $X$
  is in $\majcl(\subC)$.
\end{proof}

\begin{clm}
  \label{pseudinmaj}
  If $\subC \subset \C$ is symmetric, balanced, and nontrivial,
  then every pseudo-balanced choice function $d \in \C$ is in $\majcl(\subC)$.
\end{clm}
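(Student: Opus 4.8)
The plan is to reduce the problem to cyclic functions and then combine them in the space of probability sequences, where convexity makes the bookkeeping transparent. By Claims~\ref{triclm} and~\ref{cycclm}, every cyclic function on $X$ lies in $\majcl(\subC)$, and I would combine these with Claim~\ref{majclisprcl} to attach to each cyclic function $g$ a probability sequence $\prt_g \in \prcl(\subC)$ with $\maj(\prt_g) = g$. The starting observation is that such a $\prt_g$ is supported exactly on the edges of $g$: since $\prt_g \in \pr(X)$ we have $(\prt_g)_{x,y} = -(\prt_g)_{y,x}$, so $\maj(\prt_g) = g$ forces $(\prt_g)_{x,y} > 0$ precisely on the edges $x \to y$ of $\tor(g)$, with $(\prt_g)_{x,y} = 0$ on every pair outside $\dom g$.

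Now let $d$ be pseudo-balanced and let $E$ be the edge set of $\tor(d)$. For each $e \in E$, pseudo-balance provides a directed cycle $C_e$ in $\tor(d)$ through $e$; let $g_e$ be the cyclic function on the vertices of $C_e$, so $g_e \in \majcl(\subC)$ and $\prt_{g_e} \in \prcl(\subC)$ is supported exactly on the (correctly oriented) edges of $C_e$, each of which is an edge of $d$. I would then set
\[
\prt_d = \frac{1}{\abs{E}} \sum_{e \in E} \prt_{g_e}.
\]
Because $\prcl(\subC)$ is convex (Definition~\ref{subdef}), $\prt_d \in \prcl(\subC)$, so by Claim~\ref{majclisprcl} it suffices to check that $\maj(\prt_d) = d$.

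That verification is where the pseudo-balance hypothesis does its work. If $x \to y$ is an edge of $d$, then no cycle $C_e$ can contain $y \to x$, since that edge is absent from $\tor(d)$; hence every summand $(\prt_{g_e})_{x,y}$ is nonnegative, and the summand for $e = (x \to y)$ is strictly positive, giving $(\prt_d)_{x,y} > 0$ and $\maj(\prt_d)\{x,y\} = x$. If instead $\{x,y\} \notin \dom d$, then neither orientation is an edge of $d$, so no $C_e \subseteq \tor(d)$ meets the pair and every $(\prt_{g_e})_{x,y}$ vanishes, giving $(\prt_d)_{x,y} = 0$ and leaving the pair undecided. Thus $\maj(\prt_d) = d$, and $d \in \majcl(\subC)$.

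The step I expect to be most delicate is precisely this non-cancellation argument: the construction works only because every cycle $C_e$ lives inside $\tor(d)$, so the individual cyclic contributions reinforce the edges of $d$ without ever opposing one another or spilling onto a pair outside $\dom d$. This is also why I would argue in the probability-sequence space rather than by unioning voter populations directly: simple majority is nonlinear, and the clean ``no opposing votes'' feature of Claim~\ref{triclm} need not survive the $\lcm$-based combination used in Claim~\ref{cycclm}, whereas a convex combination of the $\prt_{g_e}$ keeps the sign computation exact.
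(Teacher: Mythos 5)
Your proposal is correct and follows essentially the same route as the paper: the same decomposition of $\tor(d)$ into one directed cycle per edge via Claims~\ref{triclm} and~\ref{cycclm}, and the same non-cancellation observation that every cycle lies inside $\tor(d)$ so the contributions on each pair never oppose one another. The only difference is that you take a convex combination of probability sequences where the paper unions the voter populations and sums the integer weights $W^u_v(c_i)$ directly, and these two viewpoints are interchangeable by Claims~\ref{majthm} and~\ref{majclisprcl}.
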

\begin{proof}
  Suppose $d$ is an arbitrary pseudo-balanced choice function.
  Every edge of $\tor(d)$ is on a directed cycle, so consider the
  decomposition of $\tor(d)$ into cycles $C_{x,y}$ for each edge $x \to y$ in
  $\tor(d)$.  Let $d_{x,y}$ be the cyclic function corresponding to $C_{x,y}$ for
  each edge $x \to y$ in $\tor(d)$.  By Claim~\ref{triclm}, $\subC$ meets the
  requirements of Claim~\ref{cycclm}, so every cyclic function on $X$ is in
  $\majcl(\subC)$; in particular, each $d_{x,y} \in \majcl(\subC)$.  If we
  combine all the finite sets of voters which yielded each $d_{x,y}$, we get
  another finite set of voters from $\subC$, since there are finitely many edges
  in $\tor(d)$.

  For any $\{u,v\} \in \binom{X}{2} \setminus \dom d$,
  the edge $u \to v$ is not in any cycle $C_{x,y}$.
  $\sum W^u_v(c_i) = 0$ over the $c_i$ yielding any $d_{x,y}$, so
  $\sum W^u_v(c_i) = 0$ over all our voters and $u$ and $v$ tie in their strict
  simple majority outcome.

  For any $\{u,v\} \in \binom{X}{2}$ with $d\{u,v\} = u$, the edge $u \to v$ is in
  $\tor(d)$.  Since each cycle $C_{x,y}$ has edges only from $\tor(d)$, each
  population of $c_i$ yielding some $d_{x,y}$ has $\sum W^u_v(c_i) \geq 0$, and
  in particular the population yielding $d_{u,v}$ has $\sum W^u_v(c_i) > 0$, so
  over all our voters $\sum W^u_v(c_i) > 0$.

  Thus the strict simple majority outcome is $d$, and $d \in \majcl(\subC)$.
\end{proof}
Combining Claims~\ref{majinpseud} and \ref{pseudinmaj}, we have that
$\majcl(\subC)$ is the set of all pseudo-balanced choice functions
whenever $\subC$ is symmetric, balanced, and nontrivial.

\newpage
\section{Chaotic Choices}
\label{elsesec}
\begin{dfn}
  For a choice function $c \in \C$,
  \begin{list}{(\alph{dlcount})}{\usecounter{dlcount}}
    \item
      For $x \in X$, the \emph{valence} of $x$ with $c$ is
      \[
      \val_c(x) = \sum_{y \in X} W^x_y(c).
      \]
    \item
      For $\ell \in \{-1, 0, 1\}$, let
      \[
      V_\ell(c) = \{(\val_c(x) - \ell, \val_c(y) + \ell)) : \{x, y\} \in
      \binom{X}{2}, \, W^x_y(c) = \ell\}.
      \]
      These are the valence pairs, with winners on the right in $V_{-1}$,
      ties in $V_0$, winners on the left in $V_1$, and with 1 subtracted from the
      valence of the winner.  In fact,
      \[
      V_\ell(c) = \left\{\sum_{z \in X \setminus \{x,y\}}
       \left(W^x_z(c), W^y_z(c)\right) : \{x,y\} \in \binom{X}{2}, \, W^x_y(c) = \ell\right\}.
      \]
    \item
      For a subset $A$ of $\Q \times \Q$, let $\conv(A)$ be the convex hull of $A$ in $\Q \times \Q$.
    \item
      Let
      \begin{align*}
        V^*(c)& = \{a \bar{v}_1 + (1 - a) \bar{v}_0 : \bar{v}_1 \in \cvw,
        \bar{v}_0 \in \cvt, a \in (0, 1]_\Q \}.
      \end{align*}
      These are the convex hulls of the ``winning'' valence pairs, together with ties,
      requiring some contribution from a non-tied pair.
  \end{list}
\end{dfn}

\begin{clm}
  \label{reflectclm}
  $(k_0,k_1) \in V_\ell(c) \iff (k_1,k_0) \in V_{-\ell}(c)$, for any
  $\ell \in \{-1, 0, 1\}$.
\end{clm}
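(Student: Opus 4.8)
The plan is to exploit the skew-symmetry of the weight function. Directly from its definition, $W^y_x(c) = -W^x_y(c)$ for every ordered pair $(x,y)$, since $c\{x,y\} = c\{y,x\}$ and the three cases merely exchange the roles of winner and loser. The whole claim will then follow from a single observation: passing from the ordered pair $(x,y)$ to $(y,x)$ simultaneously negates the weight and reflects the associated valence pair, which is exactly what is needed to turn membership in $V_\ell(c)$ into membership in $V_{-\ell}(c)$ with coordinates swapped.

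In detail, I would start from $(k_0, k_1) \in V_\ell(c)$ for a fixed $\ell \in \{-1,0,1\}$. By definition there is an unordered pair $\{x,y\} \in \binom{X}{2}$, taken with a definite assignment of its two elements to the symbols $x$ and $y$, such that $W^x_y(c) = \ell$ and
\[
(k_0, k_1) = (\val_c(x) - \ell, \val_c(y) + \ell).
\]
Reconsidering the same unordered pair with its two elements interchanged, i.e. applying the defining formula for $V_{-\ell}(c)$ to the ordered pair $(y,x)$, skew-symmetry gives $W^y_x(c) = -\ell$, so this ordering does contribute to $V_{-\ell}(c)$, and the point it contributes is
\[
(\val_c(y) - (-\ell), \val_c(x) + (-\ell)) = (\val_c(y) + \ell, \val_c(x) - \ell) = (k_1, k_0).
\]
Hence $(k_1, k_0) \in V_{-\ell}(c)$. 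The converse is the same statement with $\ell$ replaced by $-\ell$: since $\{-1,0,1\}$ is closed under negation and $-(-\ell) = \ell$, applying the implication just proved to $-\ell$ and the pair $(k_1, k_0)$ recovers $(k_0, k_1) \in V_\ell(c)$, so the two memberships are equivalent for every $\ell$.

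I do not expect a genuine obstacle here; the only point demanding care is the bookkeeping of orderings. Because $V_\ell(c)$ is indexed by unordered pairs $\{x,y\}$ while the contributed point depends on which element is named $x$, one must confirm that the intended ordering is well specified. For $\ell = \pm 1$ this is automatic, since exactly one of the two orderings of a decided pair satisfies $W^x_y(c) = \ell$. For $\ell = 0$ both orderings satisfy $W^x_y(c) = 0$, but then the two contributed points $(\val_c(x), \val_c(y))$ and $(\val_c(y), \val_c(x))$ are reflections of one another and both lie in $V_0(c) = V_{-0}(c)$, so the equivalence holds trivially in that case as well.
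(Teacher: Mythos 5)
Your proposal is correct and follows essentially the same route as the paper: both arguments rest on the skew-symmetry $W^y_x(c) = -W^x_y(c)$ and the observation that reversing the ordering of the pair swaps the coordinates of the contributed valence pair while negating $\ell$. The paper merely splits the verification into the $\ell = \pm 1$ case and the $\ell = 0$ case, whereas you handle all three values uniformly; the extra care you take with the ordering bookkeeping is sound but not a substantive difference.
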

\begin{proof}
  A pair $\left(\val_c(u) + 1, \val_c(v) - 1\right)$ is in $V_{-1}(c)$ iff
  $W^u_v(c) = -1 \iff W^v_u(c) = 1$, so $(\val_c(v) - 1, \val_c(u) + 1) \in V_1(c)$.

  A pair $\left(\val_c(u), \val_c(v)\right)$ is in $V_0(c)$
  iff $\{u, v\} = \{v, u\} \notin \dom c$, so
  $\left(\val_c(v), \val_c(u)\right) \in V_0(c)$.
\end{proof}

\begin{clm}
  \label{partifflines}
  An imbalanced $c \in \C$ is partisan iff
  \begin{itemize}
    \item
      $V_1(c)$ lies on a line parallel to the line $y = x$, and
    \item
      $V_0(c)$ is contained in the line $y = x$.
  \end{itemize}
\end{clm}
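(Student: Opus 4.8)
The plan is to prove the two directions separately; the forward implication is a direct computation, and the reverse implication is the substantive one. Throughout I will use two elementary facts: first, $\sum_{x \in X}\val_c(x) = \sum_{(x,y) \in X^2}W^x_y(c) = 0$, since $W^x_y(c) = -W^y_x(c)$; and second, that every pair $\{x,y\}$ is either an edge of $\tor(c)$, contributing a point of $V_1(c)$ or $V_{-1}(c)$, or a tie, contributing a point of $V_0(c)$.

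For the forward direction, suppose $c$ is partisan with winning set $W$, where $l = \abs{W}$ and $0 < l < \n$. A quick count using the second formula for $V_\ell(c)$ gives $\val_c(x) = \n - l$ for $x \in W$ and $\val_c(x) = -l$ for $x \notin W$. Every winning edge runs from $W$ to $X \setminus W$, so $V_1(c)$ is the single point $(\n - l - 1,\, 1 - l)$, which lies on the line $y = x - (\n - 2)$, parallel to $y = x$; and every tie joins two vertices of $W$ or two of $X \setminus W$, so $V_0(c) \subseteq \{(\n - l, \n - l),\, (-l,-l)\}$, contained in $y = x$. This settles the forward implication.

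For the reverse direction, assume $c$ is imbalanced and satisfies the two line conditions. Since $c$ is imbalanced it has at least one edge, so $V_1(c) \neq \0$; the first hypothesis then furnishes a constant $D$ with $\val_c(x) - \val_c(y) = D$ for every edge $x \to y$, while the second hypothesis gives $\val_c(x) = \val_c(y)$ for every tie $\{x,y\}$. The first key step is to show $D > 0$. Summing $\val_c(x) - \val_c(y)$ over all edges and regrouping by vertex, the coefficient of $\val_c(v)$ is $\operatorname{outdeg}(v) - \operatorname{indeg}(v)$, which is exactly $\val_c(v)$; hence the sum equals $\sum_{v \in X} \val_c(v)^2$, a positive number because $c$ is imbalanced. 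But the same sum also equals $D$ times the (positive) number of edges, forcing $D > 0$.

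The second key step is to show $c$ takes exactly two valence values. Let $M$ be the maximal valence, attained at some $x_0$. Comparing each other vertex to $x_0$: a tie gives valence $M$, an edge $x_0 \to y$ gives $M - D$, and an edge $y \to x_0$ would give $M + D > M$, impossible by maximality. Hence every valence lies in $\{M, M - D\}$, and both values must occur, since otherwise all valences equal $M$ and $\sum_x \val_c(x) = 0$ forces $M = 0$, making $c$ balanced. Putting $W = \{x : \val_c(x) = M\}$ and $X \setminus W = \{x : \val_c(x) = M - D\}$, both nonempty, I finish by classifying pairs: an edge within $W$ or within $X \setminus W$ would force $D = 0$, so such pairs are ties; a cross pair that is a tie would force $M = M - D$, again $D = 0$, so every cross pair is an edge, necessarily directed from the higher valence to the lower, i.e.\ from $W$ to $X \setminus W$. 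Thus $c\{x,y\} = x$ iff $x \in W$ and $y \notin W$, so $c$ is partisan. I expect the determination of the sign of $D$ to be the main obstacle, since without it the maximal-valence argument cannot exclude in-edges to $x_0$ and the two-value structure could collapse; the identity $\sum_{\text{edges}}(\val_c(x)-\val_c(y)) = \sum_v \val_c(v)^2$ is the crux, and it is precisely where imbalance enters.
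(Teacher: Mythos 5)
Your proof is correct, and its skeleton matches the paper's: the forward direction is the same explicit valence computation ($\val_c(x) = \n - l$ on the winning set, $-l$ off it), and the reverse direction likewise reads the hypotheses as saying that ties join vertices of equal valence while every edge $x \to y$ satisfies $\val_c(x) - \val_c(y) = D$ for a fixed constant, then extracts a two-class structure. Where you genuinely diverge is in how that structure is obtained. The paper anchors at a single witness edge $w \to v$ (which exists by imbalance) and compares every other vertex to both $w$ and $v$, concluding that only the two valences $\val_c(w)$ and $\val_c(v)$ occur, with the degenerate case $D = 0$ disposed of separately because it would force all valences equal and hence $c$ balanced. You instead first establish the sign of $D$ via the identity $\sum_{\text{edges } x \to y}\left(\val_c(x) - \val_c(y)\right) = \sum_{v} \val_c(v)^2 > 0$, and then run a maximum-valence argument. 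This costs you one extra lemma but buys a cleaner endgame: knowing $D > 0$ at the outset immediately identifies the high-valence class as the winning set and makes the final classification of pairs (ties within classes, edges across, directed high-to-low) entirely mechanical, whereas the paper must separately argue that a hypothetical third valence $\val_c(w) + D$ is incompatible with comparability to $v$ and defer the $D = 0$ case to the end. Both arguments are sound; yours is arguably the more transparent of the two.
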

\begin{proof}
  Suppose that $V_0(c)$ is all on $y = x$.  Thus, candidates are only
  ever tied with others of the same valence.
  Suppose further that $V_1(c)$ is on a line $y = x - b$.
  Then whenever $c\{w,v\} = w$, $\val_c(w) - \val_c(v) = b + 2$ is constant;
  the valence of any winner is always $b + 2$ more than the defeated.
  Since $c$ is imbalanced, there is such a pair $\{w,v\}$.
  If any candidate $z$ has a
  valence different from that of $w$, it cannot be tied with $w$, so $\{w,z\} \in \dom c$.
  Therefore the valences differ by $b+2$, but if $\val_c(z) \neq \val_c(v)$,
  then $z$ can neither tie nor be comparable to $v$, a contradiction.
  So every candidate has the valence of $w$ or the valence of $v$.
  If two candidates with the same valence do not tie,
  then $b = -2$ and $\val_c(v) = \val_c(w)$.
  In this case, every candidate has the same
  valence, which would mean that $c$ is balanced; a contradiction.
  Furthermore, every candidate with the high valence
  must defeat everyone of the low valence, since they cannot be tied.
  This is the definition of a partisan function.

  Conversely, suppose that $c$ is partisan.  Then two elements are tied only
  if they are both in the winning subset, or both in the losing subset;
  in either case, they have the same valence,
  so $V_0(c)$ is contained in the line $y = x$.
  $V_1(c)$ is the single point $(\val_c(w) - 1, \val_c(v) + 1)$
  for any $w$ in the winning subset and $v$ in the losing subset.
  Naturally, this point is contained in a line parallel to $y = x$.
\end{proof}

\begin{clm}
  \label{abovebelow}
  If $c \in \C$ is imbalanced, then a point of $V^*(c)$ lies above
  the line $y = -x$, and a point of $V^*(c)$ lies below it.
\end{clm}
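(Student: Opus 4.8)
The plan is to reduce everything to the single linear functional $\phi(p,q) = p + q$, whose sign records the side of the line $y = -x$: a point is above iff $\phi > 0$ and below iff $\phi < 0$. The key observation is that $\phi$ is trivial to evaluate on valence pairs. If $\{x,y\}$ is decided with $W^x_y(c) = 1$, its point $(\val_c(x) - 1, \val_c(y) + 1) \in V_1(c)$ has $\phi = \val_c(x) + \val_c(y)$, the offsets $\mp 1$ cancelling; and if $\{x,y\}$ is tied, its point $(\val_c(x), \val_c(y)) \in V_0(c)$ also has $\phi = \val_c(x) + \val_c(y)$. Since $\phi$ is linear, any point $a\bar{v}_1 + (1-a)\bar{v}_0$ of $V^*(c)$ satisfies $\phi = a\,\phi(\bar{v}_1) + (1-a)\,\phi(\bar{v}_0)$, a convex combination of such valence-sums. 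So it suffices to produce pairs whose valence-sums are strictly positive and strictly negative, and then to realize them inside $V^*(c)$.

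First I would record the global identity $\sum_{x \in X} \val_c(x) = 0$, immediate from $W^x_y(c) = -W^y_x(c)$. Because $c$ is imbalanced some valence is nonzero, and since they sum to zero there is a candidate $w^+$ with $\val_c(w^+) > 0$ and a candidate $w^-$ with $\val_c(w^-) < 0$. The heart of the matter is then an averaging step: for any fixed $w$,
\[
\sum_{y \in X \setminus \{w\}} \left(\val_c(w) + \val_c(y)\right) = (\n - 1)\val_c(w) + \left(-\val_c(w)\right) = (\n - 2)\val_c(w),
\]
using $\sum_{y \neq w}\val_c(y) = -\val_c(w)$. Since $\n \geq 3$ the coefficient $\n - 2$ is positive, so applying this to $w^+$ yields a positive sum of $\n - 1$ terms, whence some pair $\{w^+, y\}$ has $\val_c(w^+) + \val_c(y) > 0$; applying it to $w^-$ yields a pair $\{w^-, y'\}$ with negative valence-sum. (Note that $\n = 2$ would make $\n - 2 = 0$, and indeed the statement fails there, so this is exactly where the hypothesis $\n \geq 3$ enters.)

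It remains to promote such a pair to an honest element of $V^*(c)$. Since an imbalanced $c$ has at least one decided pair, $V_1(c) \neq \0$, so fix any $\bar{v}_1 \in \cvw$. Take the pair $\{w^+, y\}$ with valence-sum $s > 0$. If it is decided, its $V_1(c)$-point already has $\phi = s > 0$, and I would place it in $V^*(c)$ by taking $a = 1$ (so that $\cvw \subseteq V^*(c)$). If it is tied, its $V_0(c)$-point $\bar{v}_0^\ast$ has $\phi = s > 0$, and the point $a\bar{v}_1 + (1-a)\bar{v}_0^\ast$ has $\phi = a\,\phi(\bar{v}_1) + (1-a)s$, which tends to $s > 0$ as $a \to 0^+$; hence for a small enough rational $a \in (0,1]$ it lies strictly above $y = -x$. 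The entirely symmetric argument applied to $\{w^-, y'\}$, again with $a$ near $1$ or near $0$ according as the pair is decided or tied, produces a point of $V^*(c)$ strictly below the line.

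The main obstacle is precisely this last bookkeeping around the half-open constraint $a \in (0,1]$: a pure tie-point, which is what the extremal valence-sum pair may well be, is \emph{excluded} from $V^*(c)$, so one cannot simply exhibit a vertex of $V_0(c)$. The perturbation by a small winning weight $a$ is what rescues the construction, and it succeeds only because the relevant valence-sum is \emph{strictly} positive (respectively negative), so a sufficiently small convex nudge cannot cross the line. Everything else — the cancellation of the $\pm 1$ offsets and the averaging identity — is routine.
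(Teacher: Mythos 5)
Your proof is correct and follows essentially the same route as the paper's: both rest on the identity $\sum_{x\in X}\val_c(x)=0$ to extract a pair whose valence-sum is strictly positive (resp.\ negative), note that the $\pm1$ offsets cancel so this sum equals the coordinate-sum of the corresponding valence pair, and then handle a tied pair by perturbing slightly toward a point of $V_1(c)$ so as to land inside $V^*(c)$. The only cosmetic difference is the choice of witnessing pair: the paper takes the two candidates of highest (resp.\ lowest) valence and shows their valence-sum has the right sign, whereas you fix one candidate of positive (resp.\ negative) valence and average over its $\n-1$ partners.
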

\begin{proof}
  Let $v_1$ and $v_2$ be two candidates in $X$ having the highest valences with $c$.
  Since $c$ is imbalanced, $\val_c(v_1) > 0$.
  The sum of the corresponding pair in any $V_\ell(c)$ is $\val_c(v_1) + \val_c(v_2)$.
  If this is less than or equal to $0$, then $\val_c(v_2) \leq -1$.
  Thus all other valences are at most $-1$, so the average valence
  is strictly smaller than the average of $\val_c(v_1)$ and $\val_c(v_2)$, at most
  $0$.  This is a contradiction, since the average valence is always $0$.
  Therefore, the sum of the valence pair for $v_1$ and $v_2$ in any $V_\ell(c)$
  satisfies $y + x > 0$.
  If the pair is in $V_1(c)$, it is in $V^*(c)$.
  If it is in $V_{-1}(c)$, then its reflection in $V_1(c)$ (hence in $V^*(c)$) still
  satisfies $x + y > 0$.
  If it is in $V_0(c)$, then points arbitrarily close to the pair, along any line
  to a point of $V_1(c)$, are in $V^*(c)$.  Some of these points are above $y = -x$.

  Let $u_1$ and $u_2$ be two candidates in $X$ having the smallest valences with $c$.
  By imbalance, $\val_c(u_1) < 0$.
  Suppose $\val_c(u_1) + \val_c(u_2) \geq 0$.
  Then the average of these two valences is at least $0$, and
  the average over all candidates is larger; it is strictly larger, since
  $\val_c(v_1) > 0$ figures in the average.
  This is a contradiction,
  so the sum of the valence pair for $u_1$ and $u_2$ in any $V_\ell(c)$
  satisfies $y + x < 0$.
  If this pair is in $V_{-1}(c)$, then its reflection in $V_1(c)$ still satisfies
  $x + y < 0$.  Otherwise it is in $V^*(c)$, or arbitrarily close points are
  in $V^*(c)$.
\end{proof}

To establish that every choice function is in the majority closure of a
chaotic symmetric set, we shall first establish that
some $c$ therein satisfies a rather abstruse condition which we'll call
``valence-imbalance.''
\begin{dfn}
  A choice function $c \in \C$ is \emph{valence-imbalanced} iff
  $\ntnt$ can be represented as
  $r_{-1} \bar{v}_{-1} + r_0 \bar{v}_0 + r_1 \bar{v}_1$ where
  \begin{enumerate}
    \item
      Each $\bar{v}_\ell$ is a pair in $\cvl$,
    \item
      $r_{-1}, r_0, r_1 \in [0,1]_\Q$,
    \item
      $r_{-1} + r_0 + r_1 = 1$,
    \item
      $r_{-1} \neq r_1$.
  \end{enumerate}
\end{dfn}
Note that $\ntnt \in V^*(c)$ implies that $c$ is valence-imbalanced.

\begin{clm}
  \label{parlineclm}
  If $\bar{u}$ is strictly between two points of $\cvw$ on a line segment not
  parallel to $y = x$, then the nearest point on $y = x$ to $\bar{u}$ is of the
  form $r_{-1}\bar{v}_{-1} + r_1 \bar{v}_1$, where $\bar{v}_\ell \in \cvl$,
  $r_\ell \in [0,1]_\Q$, $r_{-1} + r_1 = 1$, but $r_{-1} \neq r_1$.
\end{clm}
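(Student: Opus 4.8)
The plan is to exploit the reflective symmetry between $V_1(c)$ and $V_{-1}(c)$. By Claim~\ref{reflectclm}, $\cvn$ is exactly the mirror image of $\cvw$ under reflection across $y=x$. Write $\bar{p}$ and $\bar{q}$ for the two points of $\cvw$ with $\bar{u}$ strictly between them, and let $\bar{w}$ be the nearest point on $y=x$ to $\bar{u}$ (the foot of the perpendicular). I would parametrize the closed segment $[\bar{p},\bar{q}]$ by signed distance to $y=x$; because this segment is not parallel to $y=x$, the signed distance is a non-constant affine function along it, so its values fill a nondegenerate interval $I$ in whose interior the signed distance $d_u$ of $\bar{u}$ lies. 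For $\delta\in I$, let $\bar{v}_1(\delta)\in[\bar{p},\bar{q}]\subseteq\cvw$ be the point at signed distance $\delta$, and let $\bar{v}_{-1}(\delta)\in\cvn$ be its mirror image, at signed distance $-\delta$.

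Next I would compute where the chord from $\bar{v}_1(\delta_a)$ to $\bar{v}_{-1}(\delta_b)$ meets $y=x$. A direct calculation (cleanest in coordinates rotated so that $y=x$ becomes an axis) shows that this chord crosses $y=x$ exactly at $\bar{w}$ precisely when $\delta_a$ and $\delta_b$ have harmonic mean $d_u$, i.e.\ $\tfrac{2\delta_a\delta_b}{\delta_a+\delta_b}=d_u$; along that chord the weight on $\bar{v}_1(\delta_a)$ is $\tfrac{\delta_b}{\delta_a+\delta_b}$ and the weight on $\bar{v}_{-1}(\delta_b)$ is $\tfrac{\delta_a}{\delta_a+\delta_b}$. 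Hence these weights are unequal exactly when $\delta_a\neq\delta_b$.

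It then remains to solve the harmonic-mean equation with $\delta_a\neq\delta_b$ while keeping both values in $I$ and of the same sign, so that the chord crosses $y=x$ strictly between its endpoints and the weights lie in $[0,1]$. Since $d_u$ is interior to $I$, I would set $\delta_a=d_u+\epsilon$ for a small rational $\epsilon\neq 0$ and solve $\delta_b=\tfrac{d_u\,\delta_a}{2\delta_a-d_u}$; as $\epsilon\to 0$ we have $\delta_b\to d_u$, so for small $|\epsilon|$ both values lie in $I$ with the sign of $d_u$, and the identity $\delta_b=\delta_a$ forces $\delta_a=d_u$, so $\delta_a\neq\delta_b$. All quantities stay rational because $\bar{p},\bar{q},\bar{u}$ are rational, yielding $\bar{v}_{-1},\bar{v}_1$ and unequal $r_{-1},r_1\in[0,1]_\Q$ with $r_{-1}+r_1=1$ and $r_{-1}\bar{v}_{-1}+r_1\bar{v}_1=\bar{w}$, as required.

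Two degenerate cases need separate but easy treatment. If $d_u=0$, then $\bar{u}$ already lies on $y=x$ and equals $\bar{w}$, so I take $r_1=1$, $r_{-1}=0$, $\bar{v}_1=\bar{u}\in\cvw$, and any $\bar{v}_{-1}\in\cvn$; these weights are unequal. If $[\bar{p},\bar{q}]$ is perpendicular to $y=x$, then every chord $[\bar{v}_1(\delta_a),\bar{v}_{-1}(\delta_b)]$ already meets $y=x$ at $\bar{w}$, so no harmonic-mean constraint is needed and any two distinct same-sign values $\delta_a\neq\delta_b$ in $I$ suffice. The main obstacle is conceptual rather than computational: the obvious candidate---reflect $\bar{u}$ to $\bar{u}'\in\cvn$ and take the midpoint---realizes $\bar{w}$ only with the forbidden equal weights $\tfrac12,\tfrac12$, so one is forced to slide \emph{both} endpoints along the segment and its mirror and to identify the precise harmonic-mean relation that keeps the chord passing through $\bar{w}$ while decoupling the two weights; verifying that this relation is solvable with distinct heights is exactly where the hypothesis that the segment is not parallel to $y=x$ is used.
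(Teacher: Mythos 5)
Your proof is correct, but it takes a genuinely different route from the paper's. The paper also reduces to the cases $\bar{u}\in\{y=x\}$ and $\Lambda$ perpendicular to $y=x$ first, but in the main case it argues by a limiting construction: it shrinks the segment to one side of $y=x$, forms the sequence $\bar{u}_n=\frac1n\bar{p}_1+(1-\frac1n)\bar{u}$, intersects the line through $\bar{u}_n$ and $\bar{w}$ with the reflected line $L$ to get $\bar{v}_n\to\bar{v}$, takes $N$ large enough that $\bar{v}_N$ lands inside the reflected subsegment of $\cvn$, and then rules out equal weights by a half-plane argument (if $r_{-1}=r_1=\frac12$ then $\bar{u}_N$ and $\bar{v}_N$ would be equidistant from $y=x$, which is incompatible with their lying on opposite sides of the line $x+y=2a+b$). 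You instead parametrize the segment by a rational affine proxy for signed distance to $y=x$, derive the exact harmonic-mean criterion $\frac{2\delta_a\delta_b}{\delta_a+\delta_b}=d_u$ for the chord from $\bar{v}_1(\delta_a)$ to the mirrored $\bar{v}_{-1}(\delta_b)$ to pass through $\bar{w}$, read off the weights $\frac{\delta_b}{\delta_a+\delta_b}$ and $\frac{\delta_a}{\delta_a+\delta_b}$, and solve the constraint explicitly with a small perturbation $\epsilon\neq0$; your computations check out, including the observation that equal weights force $\delta_a=\delta_b$ and hence $\epsilon=0$. Your version buys an explicit formula for the unequal weights (which would even sharpen the denominator bookkeeping in the Bounds section) and avoids the paper's ``for all $n\geq N$'' and distance-comparison steps; the paper's version avoids setting up adapted coordinates. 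One cosmetic caution: a genuine $45^\circ$ rotation introduces $\sqrt2$, so to keep everything in $\Q\times\Q$ you should use the rational shear $(x,y)\mapsto(x+y,\,y-x)$ (or simply the function $y-x$) as your ``signed distance''; since your harmonic-mean relation and weights are invariant under rescaling all the $\delta$'s, nothing else changes. Also note, as you implicitly use in the $d_u=0$ case, that $\cvn$ is nonempty because it is the reflection of the nonempty set $\cvw$ by Claim~\ref{reflectclm}.
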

\begin{proof}
  Say $\bar{u} = (a, a + b)$.  If $b = 0$, the assertion holds.
  Otherwise, let $\bar{w} = \left(a + \frac{b}{2}, a + \frac{b}{2}\right)$;
  $\bar{w}$ is the nearest point to $\bar{u}$ on $y = x$.

  There are $\bar{p}_0 = (a_0, b_0)$ and $\bar{p}_1 = (a_1, b_1)$ in $\cvw$ so
  that $\bar{u}$ is strictly between $\bar{p}_0$ and $\bar{p}_1$.  If the line
  segment $\Lambda$ between $\bar{p}_0$ and $\bar{p}_1$ is perpendicular to $y =
  x$, then either $\bar{w}$ is in $\Lambda$, and the assertion holds, or
  $\bar{w}$ is between $\bar{p}_1$ and the reflection of $\bar{p}_0$, $\bar{q}_0
  = (b_0, a_0)$.  So for some $r_{-1}$ and $r_{1}$, $r_{-1} \bar{q}_0 + r_1 \bar{p}_1 =
  \bar{w} = \frac{1}{2} \bar{q}_0 + \frac{1}{2} \bar{p}_0$.  If $r_{-1} = r_1 =
  \frac{1}{2}$, then $\bar{p}_1 = \bar{p}_0$, contradicting that $\bar{u}$ is
  strictly between them.  So $r_{-1} \neq r_1$.

  If $\Lambda$ is not perpendicular to the line $y = x$, then we may choose the
  points $\bar{p}_0$ and $\bar{p}_1$ so that $a_0 + b_0 < 2a + b < a_1 + b_1$,
  and on the same side of $y = x$.  The reflection of $\Lambda$ over $y = x$ lies
  on a line, say $L$.  $L$ contains $\bar{v} = (a+b, a)$, $\bar{q}_0 = (b_0,
  a_0)$, and $\bar{q}_1 = (b_1, a_1)$, all in $\cvn$; so $\bar{v}$ lies strictly
  between $\bar{q}_0$ and $\bar{q}_1$ within $\cvn$.

  \begin{figure}[t]
    \begin{tikzpicture}[scale=0.45,minimum size=3pt]
  \draw[<->, thin] (2,2) -- node[right,very near end] {$y = x$} (9.5,9.5);
  \coordinate (l0) at (1,4.5);
  \coordinate (l1) at (4.5,1);
  \node (p0) at ( 3.7,2)   [fill,circle,inner sep=0pt,label=below:$\bar{p}_0$] {};
  \node (p1) at (10,  4)   [fill,circle,inner sep=0pt,label=right:$\bar{p}_1$] {};
  \node (q0) at ( 2,  3.7) [fill,circle,inner sep=0pt,label=left:$\bar{q}_0$] {};
  \node (q1) at ( 4, 10)   [fill,circle,inner sep=0pt,label=above:$\bar{q}_1$] {};

  \draw[dashed, very thin] (l0) -- +(6, 6) node[name=t0] {};
  \draw[dashed, very thin] (l1) -- +(6, 6) node[name=t1] {};
  \node (u)  at (intersection of p0--p1 and l1--t1) [fill,circle,inner sep=0pt,label=below right:$\bar{u}$] {};
  \node (v)  at (intersection of q0--q1 and l0--t0) [fill,circle,inner sep=0pt,label=above left:$\bar{v}$] {};
  \draw[dotted] (p0) -- (p1);
  \draw[dotted] (q0) -- (q1);
  \draw[dotted] (u) -- (v);
  \node (w)  at (intersection of u--v and 0,0--1,1) [fill,circle,inner sep=0pt,label=above:$\bar{w}$] {};
  \node (uN) at (intersection of 9,0--9,1 and u--p1) [fill,circle,inner sep=0pt,label=below:$\bar{u}_N$] {};
  \draw (uN) -- (intersection of uN--w and q0--q1) node [fill,circle,inner sep=0pt,label=below right:$\bar{v}_N$] {};
\end{tikzpicture}
    \caption{The situation of Claim~\ref{parlineclm}.  $\bar{u}_N$ and $\bar{v}_N$
    cannot be equidistant from $\bar{w}$, since one lies within the dashed parallel
    lines, and one lies without.}
    \label{parfig}
  \end{figure}
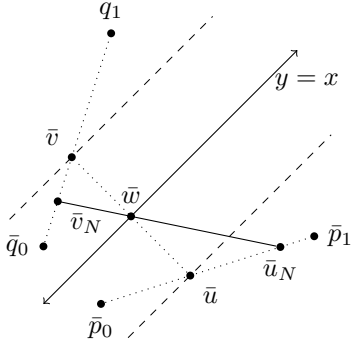

  Consider the sequence
  \[
  \bar{u}_n = \frac{1}{n} \bar{p}_1 + \left(1 - \frac{1}{n}\right) \bar{u},
  \]
  for $n \geq 1$, approaching $\bar{u}$ as $n \to \infty$.  For each $n$, let
  $\bar{v}_n$ be the intersection of the line through $\bar{u}_n$ and $\bar{w}$
  with the line $L$. (If necessary, define the $\bar{v}_n$ only for $n > j$,
  if the line from $\bar{u}_j$ through $\bar{w}$ is parallel to $L$; at most one
  such $j$ can exist.)
  As $n \to \infty$, $\bar{v}_n$ approaches $\bar{v}$.  So for
  some $N \in \N$ and all $n \geq N$, $\bar{v}_n$ is in the interval
  $\left(\bar{q}_0, \bar{q}_1\right)$, an open neighborhood of $\bar{v}$ in $L$.
  So
  \[
  \bar{w} = r_{-1} \bar{u}_N + r_1 \bar{v}_N,
  \]
  for some $r_{-1}$ and $r_1$ in $[0,1]_\Q$ with $r_{-1} + r_1 = 1$.

  If $r_{-1} = r_1 = \frac{1}{2}$, then $\bar{v}_N = \bar{w} + (\bar{w} -
  \bar{u}_N)$ is just as far from the line $y = x$ as $\bar{u}_N = \bar{w} -
  (\bar{w} - \bar{u}_N)$.  But notice, either
  \begin{itemize}
    \item
      both the interval $(\bar{u}, \bar{p}_1)$ on $\Lambda$ and the
      interval $(\bar{v}, \bar{q}_1)$ on $L$ are farther from $y = x$ than $\bar{u}$
      is, while the intervals $(\bar{p}_0, \bar{u})$ and $(\bar{q}_0, \bar{v})$ are
      closer, or
    \item
      the intervals $(\bar{u}, \bar{p}_1)$ and $(\bar{v}, \bar{q}_1)$ are closer to $y
      = x$ than $\bar{u}$ is, while the intervals $(\bar{p}_0, \bar{u})$ and
      $(\bar{q}_0, \bar{v})$ are farther.
  \end{itemize}
  $\bar{u}_N$ is in the half-plane $y + x > 2 a + b$, and $\bar{w}$ is the
  sole intersection of the line $\{\bar{u}_N + t(\bar{w} - \bar{u}_N) : t \in \Q\}$
  with the line $y + x = 2 a + b$.  Thus $\bar{v}_N$ is in the half-plane $y + x < 2 a +
  b$, hence in the interval $(\bar{q}_0, \bar{v})$ of $L$.  But then the
  distance from $y = x$ to $\bar{u}$ is strictly between the distances from $y = x$
  to $\bar{v}_N$ and to $\bar{u}_N$, contradicting that these distances are
  equal.

  So $r_{-1} \neq r_1$.
\end{proof}

\begin{clm}
  \label{imbalclm}
  If $c$ is chaotic, then $c$ is valence-imbalanced.
\end{clm}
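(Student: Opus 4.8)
The plan is to construct the required asymmetric representation of $\ntnt$ directly, using non-partisanness to break the reflection symmetry recorded in Claim~\ref{reflectclm}. First I would make explicit what that symmetry costs us. Reflection across $y = x$ sends $V_1(c)$ to $V_{-1}(c)$ and fixes $V_0(c)$, so $\cvw$ and $\cvn$ are interchanged while $\cvt$ is preserved. Consequently the obvious way to reach $\ntnt$---take any point $\bar b \in V^*(c)$ on $y = -x$ and average it with its reflection---always yields a representation with $r_{-1} = r_1$. The entire difficulty is thus to pin a representation at the origin while forcing $r_{-1} \neq r_1$; this asymmetry is exactly what chaos supplies and what partisan functions lack.

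Next I would secure the crossing point. By Claim~\ref{abovebelow}, $V^*(c)$ meets both half-planes $x + y > 0$ and $x + y < 0$; since along the segment joining two such points the weight on $\cvw$ stays positive, that segment remains in $V^*(c)$ and crosses $y = -x$ at some $\bar b \in V^*(c)$. If $\bar b = \ntnt$ we finish immediately by the remark that $\ntnt \in V^*(c)$ forces valence-imbalance. So I may assume $\bar b = (t, -t)$ with $t \neq 0$, and (reflecting if necessary) that $t > 0$.

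Now I would split on the failure of partisanness. Since $c$ is imbalanced and not partisan, Claim~\ref{partifflines} gives that either $V_1(c)$ is not on a line parallel to $y = x$, or $V_0(c) \not\subseteq \{y = x\}$. In the first case there are two points of $\cvw$ spanning a segment not parallel to $y = x$; applying Claim~\ref{parlineclm} to an interior point $\bar u$ of that segment realizes the nearest point of $y = x$ to $\bar u$ as an asymmetric combination $r_{-1}\bar v_{-1} + r_1 \bar v_1$ with $r_{-1} \neq r_1$. The foot of $\bar u$ is the origin precisely when $\bar u$ lies on $y = -x$, so if the tilted segment crosses $y = -x$ I take $\bar u$ there and am done. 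Otherwise $\cvw$ lies on one side of $y = -x$, and I instead combine the asymmetric diagonal point that Claim~\ref{parlineclm} produces with a point of the opposite side furnished by Claim~\ref{abovebelow}, taking the convex combination along a line through the origin; scaling the unequal weights $r_{-1}, r_1$ by a common positive factor keeps them unequal, so the representation of $\ntnt$ remains asymmetric. In the second case I would use the off-diagonal tie: its reflection also lies in $V_0(c)$, so $\cvt$ contains a segment transverse to $y = x$, and sliding the $V_0$-point along it lets me rebalance the weights on $\cvw$ and $\cvn$ unequally while holding the combination fixed at $\ntnt$.

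The step I expect to be the main obstacle is the transport in the first case when $\cvw$ does not cross $y = -x$, together with the analogous bookkeeping in the second: one must simultaneously keep the combined point exactly at the origin, keep all three weights in $[0,1]_\Q$, and preserve $r_{-1} \neq r_1$. Achieving all three at once is delicate because the reflection symmetry drives every ``balanced'' construction back to $r_{-1} = r_1$; the leverage that makes it possible is precisely the transverse direction---either the non-parallel segment in $\cvw$ exploited by Claim~\ref{parlineclm}, or the off-diagonal tie in $\cvt$---guaranteed by non-partisanness through Claim~\ref{partifflines}.
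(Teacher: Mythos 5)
Your overall architecture matches the paper's: use Claim~\ref{partifflines} to extract a transverse direction from non-partisanness, Claim~\ref{parlineclm} to manufacture the unequal weights, and Claim~\ref{abovebelow} to position things around $y=-x$. But two of your steps have real gaps. The most serious is the transport step in your first case. When the tilted segment in $\cvw$ does not cross $y=-x$, you combine the point $\bar{w} = r_{-1}\bar{v}_{-1}+r_1\bar{v}_1$ on $y=x$ with ``a point of the opposite side furnished by Claim~\ref{abovebelow}.'' That claim furnishes a point of $V^*(c)$, which in general has the form $a\bar{v}_1'+(1-a)\bar{v}_0'$ with $a>0$; in the combined representation of $\ntnt$ the total $V_1$-weight becomes $sr_1+(1-s)a$ while the $V_{-1}$-weight is $sr_{-1}$, and these can coincide even though $r_1\neq r_{-1}$ (e.g.\ $r_{-1}=0.6$, $r_1=0.4$, $s=\frac{1}{2}$, $(1-s)a=0.1$). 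Your remark that scaling by a common positive factor preserves inequality ignores the extra $V_1$-mass carried by the $V^*$ point. The paper's Case~\ref{fourcase2} repairs exactly this: since in that situation $V_1(c)$ lies entirely on one side of $y=-x$, the opposite-side point can be taken in $\cvt$ itself, so it feeds only $r_0$ and the weights $r_{\pm1}$ are scaled by the same factor $1-r$. (Relatedly, ``otherwise $\cvw$ lies on one side of $y=-x$'' does not follow from your particular segment failing to cross; you must first argue, as the paper does in its Case~\ref{threecase}, that whenever $\cvw$ straddles $y=-x$ and is not contained in a line parallel to $y=x$, one can choose a non-parallel segment whose crossing point is strictly interior.)

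The second gap is your case $V_0(c)\not\subseteq\{y=x\}$. Sliding the tie-point toward an off-diagonal $(k_0,k_1)$ displaces the combination by a vector that generally has a nonzero component along $y=x$ as well as along $(1,-1)$; trading weight between $\bar{v}_1$ and its reflection only moves the sum along $(1,-1)$ (and not at all if $\bar{v}_1$ happens to lie on $y=x$), so you cannot in general hold the combination at $\ntnt$. You also need the $V_0$-weight of your crossing point to be positive before there is anything to slide. The paper splits this case in two: if $V_0(c)\setminus\{\ntnt\}$ meets both closed sides of $y=-x$ it produces a nonzero tie-point on $y=-x$ and concludes $\ntnt\in V^*(c)$ directly; otherwise it forms the triangle $\conv\{\bar{v},\bar{u},(k_0',k_1')\}$ with $\bar{u}$ the reflection of $\bar{v}\in V_1(c)$ and shows that a representation of $\ntnt$ with equal weights on $\bar{v}$ and $\bar{u}$ forces $k_0'=k_1'$. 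Your outline would need to be rebuilt along one of these lines; as written the rebalancing cannot be carried out.
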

\begin{proof}
  We consider five cases.

  \begin{case}
    \label{fourcase1}
    $V_0(c) \setminus \{\ntnt\}$ is not contained in $y = x$, nor in one open
    half-plane of $y = -x$.
  \end{case}

  By Claim~\ref{abovebelow}, there is a point $\bar{v}$ of $V^*(c)$ on the line $y
  = -x$.  There is a point $(k_0, -k_0) \in \cvt$, $k_0 \neq 0$.  Either $\ntnt$
  is between $\bar{v}$ and $(k_0, -k_0)$ or it is between $\bar{v}$ and $(-k_0,
  k_0)$; either way, it is in $V^*(c)$, so $c$ is valence-imbalanced.

  \begin{case}
    $V_0(c) \setminus \{\ntnt\}$ is not contained in $y = x$, but is contained in
    one open half-plane of $y = -x$.
  \end{case}

  There is $(k_0, k_1) \in V_0(c)$ such that $k_0 \neq k_1$ and $k_0 \neq -k_1$.
  By Claim~\ref{abovebelow}, there is a point $\bar{v} \in V_1(c)$ strictly on
  the other side of $y = -x$.  If $\bar{v}$ is on $y = x$, then $\ntnt$ is on the
  line segment between $\bar{v}$ and $\bar{w} = \left(\frac{k_0 + k_1}{2}, \frac{k_0 +
  k_1}{2}\right)$, hence in $V^*(c)$, so $c$ is valence-imbalanced.
  If $\bar{v}$ is not on $y = x$, then it has a reflection $\bar{u} \in V_{-1}(c)$.
  $\conv\{\bar{v}, \bar{u}, \bar{w}\}$ contains an open disc about $\ntnt$.
  Since there are points in $\cvt$ arbitrarily close to $\bar{w}$ on the line
  between $(k_0, k_1)$ and $(k_1, k_0)$, we may choose
  $(k'_0, k'_1)$ so $k'_0 \neq k'_1$ and $\ntnt \in \conv\{\bar{v}, \bar{u}, (k'_0, k'_1)\}$.

  Suppose $c$ is valence-balanced.  Say $\bar{v}$ is $(v_0, v_1)$.  Then
  \[
  \ntnt = r \bar{v} + (1 - 2r)(k'_0, k'_1) + r \bar{u}
  = r(v_0 + v_1, v_1 + v_0) + (1 - 2r)(k'_0, k'_1),
  \]
  so $k'_0 = \frac{-r (v_0 + v_1)}{1 - 2r} = k'_1$, a contradiction.
  Therefore $c$ is valence-imbalanced.

  \begin{case}
    $V_0(c)$ is contained in $y = x$, and $V_1(c)$ is contained in a line
    parallel to $y = x$.
  \end{case}

  Then $c$ is partisan by Claim~\ref{partifflines}, contradicting that $c$ is chaotic.

  \begin{case}
    \label{threecase}
    $V_0(c)$ is contained in $y = x$, $\cvw$ contains a line segment not parallel to
    $y = x$, and $V_1(c)$ has points on either side of $y = -x$.
  \end{case}

  Then there is a point of $y = -x$ strictly between two points of $\cvw$ such that the
  line segment between them is not parallel to $y = x$.
  By Claim~\ref{parlineclm}, $(0,0) = r_{-1}\bar{v}_{-1} + r_1\bar{v}_1$ where
  $\bar{v}_\ell \in \cvl$, $r_\ell \in [0,1]_\Q$, $r_{-1} + r_1 = 1$, but $r_{-1}
  \neq r_1$, i.e. $c$ is valence-imbalenced.

  \begin{case}
    \label{fourcase2}
    $V_0(c)$ is contained in $y = x$, $\cvw$ contains a line segment not parallel to
    $y = x$, and $V_1(c)$ is entirely on one side of $y = -x$.
  \end{case}

  Let $\bar{v}$ be strictly between two points of $V_1(c)$ on a line segment not
  parallel to $y = x$.
  By Claim~\ref{parlineclm}, the nearest point $\bar{w}$ to $\bar{v}$ on the line
  $y = x$ is $r_{-1}\bar{v}_{-1} + r_1\bar{v}_1$ where
  $\bar{v}_\ell \in \cvl$, $r_\ell \in [0,1]_\Q$, $r_{-1} + r_1 = 1$, but $r_{-1}
  \neq r_1$.
  By Claim~\ref{abovebelow}, $V_0(c)$ has a point $\bar{v}_0 = (k, k)$ strictly on
  the other side of $y = -x$ from $\bar{v}$, and hence from $\bar{w}$.  So for some
  $r \in [0,1]_\Q$,
  \[
  \ntnt = r \bar{v}_0 + (1 - r)(r_{-1}\bar{v}_{-1} + r_1\bar{v}_1) \\
  = (1 - r) r_{-1} \bar{v}_{-1} + r \bar{v}_0 + (1 - r) r_1 \bar{v}_1,
  \]
  and $(1 - r) r_{-1} \neq (1 - r) r_1$, so $c$ is valence-imbalanced.
\end{proof}

\begin{clm}
  \label{arrowclm}
  If $\subC \subseteq \C$ is symmetric and $c \in \subC$ is valence-imbalanced,
  then for each $\{x, y\} \in \binom{X}{2}$ there is $d \in \majcl(\subC)$ such that
  $\dom d = \{\{x,y\}\}$.
  \end{clm}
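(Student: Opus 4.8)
The plan is to use Claim~\ref{majclisprcl} and produce a probability sequence $\prt \in \prcl(\subC)$ all of whose coordinates vanish except on the pair $\{x,y\}$; then $d = \maj(\prt)$ will satisfy $\dom d = \{\{x,y\}\}$ and lie in $\majcl(\subC)$. Because $\subC$ is symmetric, every $c^\sigma$ lies in $\subC$, so every rational convex combination of the sequences $\pr(c^\sigma)$ is available inside $\prcl(\subC)$. The whole strategy is to assemble such a combination that cancels on every pair meeting a third candidate while surviving on $\{x,y\}$, and the device that makes this cancellation possible is exactly the valence-imbalance hypothesis.

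The key construction is a symmetrization. Fix a pair $\{a,b\}$ with $W^a_b(c) = \ell$, so its valence pair $(p,q) = (\val_c(a) - \ell, \val_c(b) + \ell)$ lies in $V_\ell(c)$, and choose $\pi \in \perX$ with $\pi(a) = x$ and $\pi(b) = y$. Let $\Gamma_{x,y}$ be the permutations fixing both $x$ and $y$, and average $\pr\bigl((c^\pi)^\tau\bigr)$ over $\tau \in \Gamma_{x,y}$. I would then verify the three facts that make this useful, using $W^u_v(c^\sigma) = W^{\sigma^{-1}u}_{\sigma^{-1}v}(c)$: the $\{x,y\}$-coordinate is constantly $\ell$; for each third candidate $z$ the $\{x,z\}$-coordinate averages to $\frac{1}{\n-2}\sum_{w \neq x,y} W^x_w(c^\pi) = \frac{p}{\n-2}$, and the $\{y,z\}$-coordinate to $\frac{q}{\n-2}$, uniformly in $z$; and any coordinate on a pair of distinct third candidates averages to $\frac{1}{(\n-2)(\n-3)}\sum_{w \neq w'} W^w_{w'}(c^\pi)$, which is $0$ by antisymmetry. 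Thus this averaged sequence carries $\ell$ on $\{x,y\}$, records the valence pair $(p,q)$ scaled by $\frac{1}{\n-2}$ in the cross coordinates, and is null among third candidates.

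Taking rational convex combinations over the pairs contributing to a fixed $\ell$ realizes, in the cross coordinates, any chosen $\bar{v}_\ell \in \cvl$ scaled by $\frac{1}{\n-2}$, still with $\ell$ on $\{x,y\}$ and nothing among third candidates; call this $\prt_\ell$. Now invoke valence-imbalance: write $\ntnt = r_{-1}\bar{v}_{-1} + r_0 \bar{v}_0 + r_1 \bar{v}_1$ with $\bar{v}_\ell \in \cvl$, $r_\ell \in [0,1]_\Q$, $\sum_\ell r_\ell = 1$, and $r_{-1} \neq r_1$, and set $\prt = r_{-1}\prt_{-1} + r_0\prt_0 + r_1\prt_1 \in \prcl(\subC)$. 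The cross coordinates collapse to $\frac{1}{\n-2}\ntnt = \ntnt$, the third-candidate coordinates remain $0$, and the $\{x,y\}$-coordinate becomes $r_{-1}(-1) + r_0\cdot 0 + r_1 \cdot 1 = r_1 - r_{-1} \neq 0$. Hence $\maj(\prt)$ decides exactly $\{x,y\}$ and abstains elsewhere, which is the required $d$.

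I expect the symmetrization bookkeeping of the second paragraph to be the main obstacle: confirming that averaging over $\Gamma_{x,y}$ smears each cross edge to the uniform value dictated by the valence-pair coordinate and annihilates every third-candidate edge by antisymmetry, and checking that the degenerate case $\n = 3$ (where no pair of distinct third candidates exists) remains consistent. Once these facts are secured, the valence-imbalance identity does precisely the work it was defined for, forcing the sole surviving coordinate onto $\{x,y\}$ with the nonzero value $r_1 - r_{-1}$.
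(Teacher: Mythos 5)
Your proposal is correct and follows essentially the same route as the paper: map a representative pair to $(x,y)$, average over the permutations fixing $x$ and $y$ so that each cross-coordinate becomes the corresponding valence-pair coordinate divided by $\n-2$ and all third-candidate coordinates vanish by antisymmetry, then let the valence-imbalance identity kill the cross-coordinates while leaving $t_{x,y}=r_1-r_{-1}\neq 0$. The only difference is organizational (you build one symmetrized sequence per $\ell$ before combining, while the paper writes a single triple sum and tracks the factor $\abs{\Gamma_{x,y,u}}/\abs{\Gamma_{x,y}}=1/(\n-2)$ explicitly), which does not change the substance.
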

\begin{proof}
  Suppose $\{x, y\} \in \binom{X}{2}$.
  There are $\bar{v}_\ell \in \cvl$ and
  $r_\ell \in [0,1]_\Q$, $r_{-1} + r_0 + r_1 = 1$, $r_{-1} \neq r_1$, so
  $r_{-1} \bar{v}_{-1} + r_0 \bar{v}_0 + r_1 \bar{v}_1 = \ntnt$.
  \[
  \bar{v}_{-1} = \sum_{\bar{v} \in V_{-1}(c)} s^{-1}_{\bar{v}} \bar{v}, \quad
  \bar{v}_0 = \sum_{\bar{v} \in V_0(c)} s^0_{\bar{v}} \bar{v}, \quad
  \bar{v}_1 = \sum_{\bar{v} \in V_1(c)} s^1_{\bar{v}} \bar{v},
  \]
  where $\sum_{\bar{v} \in V_\ell(c)} s^\ell_{\bar{v}} = 1$ for each $\ell$.
  Each $\bar{v} \in V_\ell(c)$ is $(\val_c(u^\ell_{\bar{v}}) - \ell,
  \val_c(w^\ell_{\bar{v}}) + \ell)$ for some $\{u^\ell_{\bar{v}},
  w^\ell_{\bar{v}}\} \in \binom{X}{2}$.  Let $\sigma^\ell_{\bar{v}}$ be a
  permutation taking $u^\ell_{\bar{v}}$ to $x$ and $w^\ell_{\bar{v}}$ to $y$, i.e.
  \[
  \sigma^\ell_{\bar{v}} =
  \begin{cases}
    (x, y) & \text{if } u^\ell_{\bar{v}} = y, w^\ell_{\bar{v}} = x, \\
    (x, w^\ell_{\bar{v}}, y) & \text{if } u^\ell_{\bar{v}} = y, w^\ell_{\bar{v}}
    \neq x, \\
    (u^\ell_{\bar{v}}, x)(w^\ell_{\bar{v}}, y) & \text{otherwise.}
  \end{cases}
  \]

  Let $\Gamma_{x,y}$ be all the permutations of $X$ which fix $x$ and $y$, and let
  \[
  \prt = \frac{1}{\abs{\Gamma_{x,y}}} \sum_{\ell \in \{-1, 0, 1\}} r_\ell
  \sum_{\bar{v} \in V_\ell(c)} s^\ell_{\bar{v}} \sum_{\tau \in \Gamma_{x,y}}
  \pr\left(c^{\tau \sigma^\ell_{\bar{v}}}\right).
  \]
  Of course, the sum of the coefficients is
  \[
  \sum_{\substack{
    \ell \in \{-1, 0, 1\}\\
    \bar{v} \in V_\ell(c)\\
    \tau \in \Gamma_{x,y}}}
  \frac{r_\ell s^\ell_{\bar{v}}}{\abs{\Gamma_{x,y}}} \  = \
  \frac{\abs{\Gamma_{x,y}}}{\abs{\Gamma_{x,y}}} \sum_{\ell \in \{-1, 0, 1\}}
  r_\ell \  = \  1.
  \]
  Since $r_\ell$, $s^\ell_{\bar{v}}$, and $\frac{1}{\abs{\Gamma_{x,y}}}$ are in $[0,1]_\Q$,
  $\frac{r_\ell s^\ell_{\bar{v}}}{\abs{\Gamma_{x,y}}} \in [0,1]_\Q$.
  So $\prt \in \prcl(\subC)$.

  Consider $t_{x,y}$.  Each $\tau$ fixes $x$ and $y$, so
  \begin{align*}
    c^{\tau \sigma^\ell_{\bar{v}}}\{x,y\}& = x \iff \\
    c^{\sigma^\ell_{\bar{v}}}\{x,y\}& = x \iff \\
    c\{u^\ell_{\bar{v}},w^\ell_{\bar{v}}\}& = u^\ell_{\bar{v}} \iff \ell = 1.
  \end{align*}
  So $W^x_y(c^{\tau \sigma^\ell_{\bar{v}}}) = \ell$.
  Thus
  \begin{align*}
    t_{x,y}& = \frac{1}{\abs{\Gamma_{x,y}}}
    \sum_{\ell \in \{-1, 0, 1\}} r_\ell
    \sum_{\bar{v} \in V_\ell(c)} s^\ell_{\bar{v}}
    \sum_{\tau \in \Gamma_{x,y}} \ell \\
    & = \frac{\abs{\Gamma_{x,y}}}{\abs{\Gamma_{x,y}}}
    \sum_{\ell \in \{-1, 0, 1\}} \ell \, r_\ell \  = \  r_1 - r_{-1}.
  \end{align*}
  Since $r_{-1} \neq r_1$, $t_{x,y} \neq 0$.

  Consider $t_{x,u}$ where $u \notin \{x,y\}$.
  \begin{align*}
    c^{\tau \sigma^\ell_{\bar{v}}}\{x,u\}& = x \iff \\
    c^{\sigma^\ell_{\bar{v}}}\{x,\tau^{-1}(u)\}& = x \iff \\
    c\{u^\ell_{\bar{v}},u^*\}& = u^\ell_{\bar{v}},
  \end{align*}
  where
  \[
  u^* = {\left(\sigma^\ell_{\bar{v}}\right)}^{-1}(\tau^{-1}(u)) =
  \begin{cases}
    \tau^{-1}(u) & \text{if } \tau^{-1}(u) \notin \{u^\ell_{\bar{v}}, w^\ell_{\bar{v}}\}, \\
    x & \text{if } \tau^{-1}(u) = u^\ell_{\bar{v}}, \\
    y & \text{if } \tau^{-1}(u) = w^\ell_{\bar{v}}.
  \end{cases}
  \]
  $u^*$ is never $u^\ell_{\bar{v}}$ or $w^\ell_{\bar{v}}$.
  For a given $\ell$ and $\bar{v}$, $u^*$ varies equally over all elements of
  $X$ besides $\{u^\ell_{\bar{v}}, w^\ell_{\bar{v}}\}$, so
  \begin{multline*}
    \sum_{\tau \in \Gamma_{x,y}} W^x_u(c^{\tau \sigma^\ell_{\bar{v}}})
    = \sum_{\tau \in \Gamma_{x,y}} W^{u^\ell_{\bar{v}}}_{u^*}(c)
    = \sum_{\tau \in \Gamma_{x,y,u}}
    \sum_{u^* \in X \setminus \{u^\ell_{\bar{v}}, w^\ell_{\bar{v}}\}} W^{u^\ell_{\bar{v}}}_{u^*}(c) \\
    = \abs{\Gamma_{x,y,u}} \left( \val_c(u^\ell_{\bar{v}}) -
      W^{u^\ell_{\bar{v}}}_{u^\ell_{\bar{v}}}(c) - W^{u^\ell_{\bar{v}}}_{w^\ell_{\bar{v}}}(c)\right)
    = \abs{\Gamma_{x,y,u}} \left( \val_c(u^\ell_{\bar{v}}) - \ell \right).
  \end{multline*}
  This is because, for each $u^* \in X \setminus \{u^\ell_{\bar{v}},
  w^\ell_{\bar{v}}\}$, there are $\abs{\Gamma_{x,y,u}}$
  permutations $\tau$ which take $u$ to $u^*$.
  $W^u_u(c) = 0$ for any $u$, and by choice of $u^\ell_{\bar{v}}$ and $w^\ell_{\bar{v}}$,
  $W^{u^\ell_{\bar{v}}}_{w^\ell_{\bar{v}}}(c) = \ell$.

  Recall that the $u^\ell_{\bar{v}}$ were chosen so that $\val_c(u^\ell_{\bar{v}}) -
  \ell$ is the first coordinate of $\bar{v}$.  Recall also that the $r_\ell$,
  $s^\ell_{\bar{v}}$, and valence pairs $\bar{v}$ were chosen so
  \[
  \sum_{\ell \in \{-1, 0, 1\}} r_\ell
  \sum_{\bar{v} \in V_\ell(c)} s^\ell_{\bar{v}} \bar{v} = \ntnt.
  \]
  \begin{align*}
    t_{x,u}& = \frac{1}{\abs{\Gamma_{x,y}}}
    \sum_{\ell \in \{-1, 0, 1\}} r_\ell
    \sum_{\bar{v} \in V_\ell(c)} s^\ell_{\bar{v}}
    \sum_{\tau \in \Gamma_{x,y}} W^x_u\left( c^{\tau \sigma^\ell_{\bar{v}}} \right) \\
    & = \frac{1}{\abs{\Gamma_{x,y}}}
    \sum_{\ell \in \{-1, 0, 1\}} r_\ell
    \sum_{\bar{v} \in V_\ell(c)} s^\ell_{\bar{v}}
    \abs{\Gamma_{x,y,u}} \left( \val_c(u^\ell_{\bar{v}}) - \ell \right) \\
    & = 0.
  \end{align*}
  Similarly, $t_{y,u} = 0$ for all $u \notin \{x,y\}$.

  Consider $t_{u,w}$ with neither $u$ nor $w$  in $\{x,y\}$.  Then
  \begin{align*}
    c^{\tau \sigma^\ell_{\bar{v}}}\{u,w\}& = u \iff \\
    c^{\sigma^\ell_{\bar{v}}}\{\tau^{-1}(u),\tau^{-1}(w)\}& = \tau^{-1}(u) \iff \\
    c\{u^*,w^*\}& = u^*,
  \end{align*}
  where $u^* = {(\sigma^\ell_{\bar{v}})}^{-1}(\tau^{-1}(u))$ and
  $w^* = {(\sigma^\ell_{\bar{v}})}^{-1}(\tau^{-1}(w))$.
  But $\tau^{-1}(u)$ and $\tau^{-1}(w)$ are never $x$ or $y$,
  so $u^*$ is never $u^\ell_{\bar{v}}$ and $w^*$ is never $w^\ell_{\bar{v}}$.
  As $\tau$ varies, $u^*$ and $w^*$ vary equally over all members of $X$
  except $u^\ell_{\bar{v}}$ and $w^\ell_{\bar{v}}$,
  so $c\{u^*, w^*\} = u^*$ just as often as $c\{u^*, w^*\} = w^*$.
  Thus
  \[
  \sum_{\tau \in \Gamma_{x,y}} W^u_w(c^{\tau \sigma^\ell_{\bar{v}}}) = 0.
  \]
  Hence
  \begin{align*}
    t_{u,v}& =
    \frac{1}{\abs{\Gamma_{x,y}}}
    \sum_{\ell \in \{-1, 0, 1\}} r_\ell
    \sum_{\bar{v} \in V_\ell(c)} s^\ell_{\bar{v}}
    \sum_{\tau \in \Gamma_{x,y}} W^u_w(c^{\tau \sigma^\ell_{\bar{v}}})
    = 0.
  \end{align*}
  So $d = \maj(\prt) \in \majcl(\subC)$ has $\dom d = \{\{x,y\}\}$;
  all other pairs have $t_{u,v} = 0$.
\end{proof}

\begin{clm}
  If $\subC \subseteq \C$ is symmetric and chaotic, then $\majcl(\subC) = \C$.
\end{clm}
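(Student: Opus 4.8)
The plan is to prove the nontrivial inclusion $\C \subseteq \majcl(\subC)$; the reverse holds immediately from Definition~\ref{majdef}. The idea is to assemble an arbitrary target $d$ out of ``single-arrow'' pieces, each of which decides exactly one pair and ties every other pair.

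Since $\subC$ is chaotic, I would fix a chaotic $c \in \subC$, which is valence-imbalanced by Claim~\ref{imbalclm}. Then Claim~\ref{arrowclm} supplies, for each $\{x,y\} \in \binom{X}{2}$, a member of $\majcl(\subC)$ whose domain is exactly $\{\{x,y\}\}$. What I actually want is the witnessing sequence rather than just the choice function: the proof of Claim~\ref{arrowclm} produces a $\prt \in \prcl(\subC)$ with $t_{x,y} = r_1 - r_{-1} \neq 0$ and $t_{u,v} = 0$ for every other pair. I will call such a sequence \emph{clean} for $\{x,y\}$. This vanishing off the target pair is the feature I intend to exploit.

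Next I would observe that $\prcl(\subC)$ is invariant under $\perX$: any element is a convex combination of sequences $\pr(c)$ with $c \in \subC$, and since $\pr(c)^\sigma = \pr(c^\sigma)$ with $c^\sigma \in \subC$ by symmetry, every $\sigma$ carries the convex hull to itself. Applying the transposition $(x,y)$ to a sequence clean for $\{x,y\}$ negates its $\{x,y\}$-entry and leaves the remaining (still zero) entries untouched, so I can obtain a clean sequence deciding $\{x,y\}$ in whichever of the two directions I please.

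Finally, given an arbitrary $d \in \C$, for each pair in $\dom d$ I would take a clean sequence whose single nonzero entry points the way $d$ decides that pair, and average them with equal rational weights to get some $\prt \in \prcl(\subC)$. Because each summand is zero off its own pair, $t_{u,v} = 0$ on every pair outside $\dom d$, while $t_{x,y}$ keeps the correct sign on every pair inside $\dom d$; hence $\maj(\prt) = d$, and $d \in \majcl(\subC)$ by Claim~\ref{majclisprcl}. The empty-domain function is handled by averaging a clean sequence with its own $(x,y)$-reflection, producing the zero sequence. The substance of the argument lives in Claims~\ref{imbalclm} and~\ref{arrowclm}; the only delicate point that remains is that combining the pieces neither decides a pair that should be tied nor reverses one that should be decided, and this is exactly what cleanness, together with the freedom to fix each direction by symmetry, guarantees.
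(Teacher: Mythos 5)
Your proposal is correct and follows essentially the same route as the paper: decompose the target into single-pair pieces via Claims~\ref{imbalclm} and~\ref{arrowclm}, use a permutation swapping $x$ and $y$ to fix the direction of each piece, and combine, relying on the fact that each witnessing sequence vanishes off its own pair. Your explicit ``clean sequence'' bookkeeping and the treatment of the empty-domain function only make precise what the paper's ``combine the voter populations'' step leaves implicit.
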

\begin{proof}
  Suppose $f \in \C$.

  By Claim~\ref{imbalclm}, there is some $c \in \subC$ which is valence-imbalanced.
  So by Claim~\ref{arrowclm}, for any $\{x, y\} \in \binom{X}{2}$ there is $d \in \majcl(\subC)$
  with $\dom d = \{\{x,y\}\}$.  If $d\{x,y\} = y$, by symmetry there is $d' \in \majcl(\subC)$
  with domain $\{\{x, y\}\}$ and $d'\{x,y\} = x$.
  So for every $\{x,y\} \in \dom f$, let $d_{x,y} \in \majcl(\subC)$ such that
  $\dom d_{x,y} = \{\{x,y\}\}$ and $d_{x,y}\{x,y\} = f\{x,y\}$.
  Now combine the voter populations which yielded each $d_{x,y}$.

  $f$ was arbitrary, so we have shown $\C \subseteq \majcl(\subC)$.
  Hence $\majcl(\subC) = \C$.
\end{proof}

\newpage
\section{Bounds}
We shall consider the loose upper bounds, implied by the preceding proofs,
on the necessary number of voters from a
symmetric set to yield an arbitrary function in its majority closure,
in terms of $\n$.
There is no doubt that much tighter bounds could be obtained.

\subsection*{Trivial}
0 functions suffice to obtain no outcome.

\subsection*{Balanced}
In order to create a triangular function from
a chosen function $c$,
$(\n - 3)!$ permutations of the rest of the set were used.
These were repeated $3$ times, to establish each pair of edges in the triangle.

An arbitrary pseudo-balanced function has each edge in a directed cycle,
so one cycle per edge suffices;
so we have at most $\binom{\n}{2} = \frac{\n(\n - 1)}{2}$ cycles.  Each is
constructed with two fewer triangles than the number of nodes it contains.
This is at most $\n - 2$, if some cycle contains every node.
So all together we have
\[
\frac{1}{2} \n (\n - 1) (\n - 2) (\n - 3)!\, 3 = \frac{3 \n!}{2}.
\]

\subsection*{Partisan}
There are at most $\n$ tiers in an arbitrary tiered function,
each with at most $\n$ elements (of course, no function meets both
conditions.) For each such element $x$, we take functions for each permutation
holding $x$ fixed; there are $(\n - 1)!$ such.
So we have $\n\, \n (\n - 1)! = \n\, \n!$ as an upper bound.

\subsection*{Imbalanced, nonpartisan, but not chaotic}
The proof of Claim~\ref{triclm} gives the size of the population $T_z$ yielding $c^{a,b,z}$
as $\abs{T_z} = 3 (\n - 3)!$, with $m = 2 (\n - 3)!$ votes for the winner of each pair.
We chose a partisan function $c$ with $l$ candidates in the winning tier.
For each of at most $\binom{\n}{2}$ edges $a \to b$ and each of the $(\n -2)$
$z \in X \setminus \{a,b\}$, we used $k$ copies of $T_z$, where
\[
k =
\begin{cases}
  (\n - 2) \binom{\n  -3}{l - 1} & \text{if } l \leq \frac{\n}{2}, \\
  (\n - 2) \binom{\n - 3}{l - 2} & \text{if } l > \frac{\n}{2},
\end{cases}
\]
and $m$ copies of a population of partisan functions $D$, where
\[
\abs{D} =
\begin{cases}
  (\n - 2l) \binom{\n - 1}{l} + (\n - 2) \binom{\n - 2}{l - 1} & \text{if } l \leq \frac{\n}{2}, \\
  (2l - \n) \binom{\n - 1}{l - 1} + (\n - 2) \binom{\n - 2}{l - 1} & \text{if } l > \frac{\n}{2}.
\end{cases}
\]
In fact, $k' = \frac{k}{\gcd(m,k)}$ copies of $T_z$, and $m' = \frac{m}{\gcd(m,k)}$ copies
of D, would suffice.  Certainly
$\binom{\n - 3}{l - 1} = \frac{(\n - 3)!}{(l - 1)!(\n - l - 2)!} \mid (\n - 3)!$,
and $\binom{\n - 3}{l - 2} = \frac{(\n - 3)!}{(l - 2)!(\n - l - 1)!} \mid (\n - 3)!$,
so we may take $k' = (\n - 2)$.

If $l \leq \frac{\n}{2}$, then we take $m' = 2(l - 1)!(\n - l - 2)!$, and
\begin{multline*}
  m' \abs{D} =
  2 (l - 1)! (\n - l - 2)! \left(
  (\n - 2l) \binom{\n - 1}{l} + (\n - 2) \binom{\n - 2}{l - 1} \right) \\
  = 2 (l - 1)! (\n - l - 2)! \left(
  \frac{(\n - 2l)(\n - 1)!}{l!(\n - l - 1)!} + \frac{(\n - 2)(\n - 2)!}{(l - 1)!(\n - l - 1)!} \right) \\
  = 2 (\n - 2)! \left(
  \frac{(\n - 2l) (\n - 1)}{l (\n - l - 1)} + \frac{\n - 2}{\n - l - 1} \right).
\end{multline*}
The last multiplicand is
\[
\frac{\n(\n - l - 1)}{l(\n - l - 1)} = \frac{\n}{l} \leq \n.
\]

If $l > \frac{\n}{2}$, then we take $m' = 2(l - 2)!(\n - l - 1)!$, and
\begin{multline*}
  m' \abs{D} =
  2 (l - 2)! (\n - l - 1)! \left(
  (2l - \n) \binom{\n - 1}{l - 1} + (\n - 2) \binom{\n - 2}{l - 1} \right) \\
  = 2 (l - 2)! (\n - l - 1)! \left(
  \frac{(2l - \n)(\n - 1)!}{(l - 1)!(\n - l)!} + \frac{(\n - 2)(\n - 2)!}{(l - 1)!(\n - l - 1)!} \right) \\
  = 2 (\n - 2)! \left(
  \frac{(2l - \n) (\n - 1)}{(l - 1)(\n - l)} + \frac{\n - 2}{l - 1} \right).
\end{multline*}
The last multiplicand is
\[
\frac{\n(l - 1)}{(l - 1)(\n - l)} = \frac{\n}{\n - l} \leq \n.
\]

So the total number of voters is
\begin{multline*}
  \binom{\n}{2} \left((\n - 2) \abs{T_z} k' + m' \abs{D} \right) \\
  \leq \frac{\n(\n - 1)}{2} \left( (\n - 2) 3 (\n - 3)! (\n - 2) + 2 (\n - 2)! \n \right) \\
  = \n (\n - 1) (\n - 2)! \left( \frac{3}{2} (\n - 2) + \n \right) \\
  = \n! \left( \frac{3}{2} \n - 3 + \n \right) < \frac{5}{2} \n \, \n!.
\end{multline*}

\subsection*{Chaotic}
Given an arbitrary function, we wish to create a ``single-edged'' $d_{x,y}$, as
developed in Section~\ref{elsesec}, for each edge in the function; there are at
most $\binom{\n}{2}$ of these.  In creating such a single-edged $d_{x,y}$, we
use a collection of symmetric functions, under permutations which take $x$ and
$y$ to elements having certain valence combinations.  Reading the proof of
Claim~\ref{imbalclm} carefully, we see that at most 4 such valence pairs are
used.  For each of these, all permutations fixing the relevant
pair are used, giving $4 (\n - 2)!$ choice functions used in
the creation of the single-edged function.  However, these are being combined
by some set of rational coefficients, so in fact we must expand each such
function taking as many copies as the numerator of its associated coefficient
when put over the least common denominator $L$.  The coefficients sum to 1, so
the sum of the numbers of copies is $L$; so $L (\n - 2)!$ functions are used for
each edge.

Six possible linear combinations of valence pairs yielding $\ntnt$ were
considered in Claim~\ref{imbalclm}.  Using the fact that the original valence
pairs are pairs of integers between $-\n$ and $\n$, we can solve for the
coefficients by matrix inversion and determine that the least common denominator
is at most the determinant of the associated matrix.
For instance, in Case~\ref{threecase}, suppose there is a point of $y = -x$
strictly between two points of $V_1(c)$ on a line segment not parallel to $y =
x$.  (This is true in this Case, unless $V_1(c)$ is contained in a line parallel
to $y = x$ except for a point on the line $y = -x$; such a situation actually has
a smaller common denominator.)

We use Claim~\ref{parlineclm}, with $\ntnt$ for $\bar{w}$, and note that either
the line from $\bar{p}_0$ through $\bar{w}$ meets the line $L$, or the
line from $\bar{p}_1$ through $\bar{w}$ does.  Say $\bar{p}$ is the one which
works.  Then $\ntnt$ is an imbalanced linear combination of the three valence pairs
$\bar{p}$, $\bar{q}_0$, and $\bar{q}_1$.  Say $\bar{p} = (a,b)$; then one of the
$\bar{q}$ is $(b, a)$ and the other is $(c,d)$.  Solving
\begin{align*}
r + s + t = 1\\
a r + b s + c t = 0\\
b r + a s + d t = 0
\end{align*}
amounts to
\[
\left[ {\begin{array}{c}
r\\
s\\
t
\end{array}} \right]
= \left[ \begin{array}{ccc}
1 & 1 & 1 \\
a & b & c \\
b & a & d
\end{array} \right]^{-1}
\left[ \begin{array}{c}
1 \\
0 \\
0
\end{array} \right].
\]
Since everything on the right hand side is an integer, the only denominator
introduced is the determinant of the 3 by 3 matrix,
$bd - ac - ad + bc + a^2 - b^2$.  Since each of $a, b, c, d$ is an integer
between $-\n$ and $\n$, this is at most $6 \n^2$.

In the cases involving four points, such as Case~\ref{fourcase1} or
Case~\ref{fourcase2}, an additional constraint is needed, which is not a consequence of
the other three.  In Case~\ref{fourcase1}, use $r_1 + r_2 + r_3 + r_4 = 1$,
$a_1 r_1 + a_2 r_2 + a_3 r_3 + a_4 r_4 = 0$,
$b_1 r_1 + b_2 r_2 + b_3 r_3 + b_4 r_4 = 0$, and add
$b_3 r_3 + b_4 r_4 = -a_3 r_3 - a_4 r_4$, since the points $\bar{v}$ and $(k_0,
-k_0)$ were linear combinations of two valence pairs on the line $y = -x$.
The lowest common denominator is at most the determinant of
\[
\left[ \begin{array}{cccc}
1 & 1 & 1 & 1 \\
a_1 & a_2 & a_3 & a_4 \\
b_1 & b_2 & b_3 & b_4 \\
0 & 0 & a_3 + b_3 & a_4 + b_4
\end{array} \right],
\]
which expands to a 16-term sum of three $a_i$ or $b_i$ each, hence at most $16
\n^3$.
In Case~\ref{fourcase2}, note that the point $\bar{v}$ between two points of
$V_1(c)$ is arbitrary; we can choose any point between the two.  Between any
two pairs of integers on which do not lie on a line perpendicular to $y = x$,
we can choose a point whose projection on $y = x$ is of the form
$\left(\frac{l}{4}, \frac{l}{4}\right)$ for some integer $l$.  Solving
\[
\left[ \begin{array}{cccc}
1 & 1 & 1 & 1\\
k & a & b & c\\
k & b & a & d\\
0 & a & b & c
\end{array} \right]^{-1}
\left[ \begin{array}{c}
1 \\
0 \\
0 \\
\frac{l}{4}
\end{array} \right]
\]
gives a determinant $k (2a^2 - b^2 - ab - ac + bd + bc - ad)$ at most $8 \n^3$;
we must multiply by 4 because of the fraction in the column vector, giving $32
\n^3$.  This is the largest denominator of the six possible linear combinations
in Claim~\ref{imbalclm}.

Thus our upper bound is
\[
L (\n - 2)! \frac{\n^2 - \n}{2} < 16\, \n^3 \n (\n - 1) (\n - 2)! = 16\, \n^3 \n!.
\]

\newpage
\bibliography{listx,816}

\begin{thebibliography}{6}
\providecommand{\natexlab}[1]{#1}
\providecommand{\url}[1]{\texttt{#1}}
\expandafter\ifx\csname urlstyle\endcsname\relax
  \providecommand{\doi}[1]{doi: #1}\else
  \providecommand{\doi}{doi: \begingroup \urlstyle{rm}\Url}\fi

\bibitem[Alon(2002)]{Alo02}
N.~Alon.
\newblock {Voting paradoxes and digraphs realizations}.
\newblock \emph{Advances in Appl. Math.}, 29:\penalty0 126--135, 2002.

\bibitem[Condorcet(1785)]{condorcet}
J.~A. N.~d. Condorcet.
\newblock \emph{Essai sur l'application de l'analyse \`a la probabilit\'e des
  d\'ecisions rendues \`a la pluralit\'e des voix}.
\newblock L'imprimerie royale, Paris, 1785.

\bibitem[Erd\H{o}s and Moser(1964)]{ErMo64}
P.~Erd\H{o}s and L.~Moser.
\newblock {On the representation of directed graphs as unions of orderings}.
\newblock \emph{Magyar Tud. Akad. Mat. Kutats Int. Kvzl.}, 9:\penalty0
  125--132, 1964.

\bibitem[McGarvey(1953)]{McG53}
D.~C. McGarvey.
\newblock {A theorem on the construction of voting paradoxes}.
\newblock \emph{Econometrica}, 21:\penalty0 608--610, 1953.

\bibitem[Shelah(2009)]{shelah2009mdp}
S.~Shelah.
\newblock What majority decisions are possible.
\newblock \emph{Discrete Mathematics}, 309\penalty0 (8):\penalty0 2349--2364,
  2009.

\bibitem[Stearns(1959)]{Ste59}
R.~Stearns.
\newblock {The voting problem}.
\newblock \emph{American Mathematical Monthly}, 66:\penalty0 761--763, 1959.

\end{thebibliography}
\bibliographystyle{abbrvnat}

\end{document}